\newtheorem{theorem}{Theorem}[section]
\newtheorem{lemma}[theorem]{Lemma}
\newtheorem{proposition}[theorem]{Proposition}
\newtheorem{definition}[theorem]{Definition}
\newtheorem{corollary}[theorem]{Corollary}
\newtheorem{remark}[theorem]{Remark}
\begin{document}

    \title[Conditions for stable equilibrium in a Cournot model]{Conditions for stable equilibrium in Cournot duopoly models with tax evasion and time delay}

\author{Ra\'ul Villafuerte-Segura}
\address{Centro de Investigaci\'on en Tecnolog\'ias de  Informaci\'on y Sistemas, Universidad Aut\'onoma del Estado de Hidalgo, Pachuca, Hidalgo, {\sc M\'exico}, 42184}
\email{villafuerte@uaeh.edu.mx}

\author{Eduardo Alvarado-Santos}
\email{mslalo\_58@hotmail.com }

\author{Benjam\'in A.~Itz\'a-Ortiz}
\address{Centro de Investigaci\'on en Matem\'aticas, Universidad Aut\'onoma del Estado de Hidalgo, Pachuca, Hidalgo, {\sc{México}}, 42184}
\email{itza@uaeh.edu.mx}

\subjclass[2000]{Primary 34D20, 91B55, ; Secondary 34C60}

\date{August 22, 2019}

\dedicatory{}

\keywords{Cournot model, stable equilibrium}

\begin{abstract}
We provide conditions for stable equilibrium in Cournot duopoly models with tax evasion and time delay. We prove that our conditions actually imply asymptotically stable equilibrium and delay independence.  Conditions include the same marginal cost and equal probability for evading taxes. We give examples of cost and inverse demand functions satisfying the proposed conditions. Some economic interpretations of our results are also included.
\end{abstract}

\maketitle

\section*{Introduction}
Since its introduction in 1838, the Cournot  model  \cite{C} has provided abundant cases  of study for both basic and applied research. As time delay has become an inherent property  often needed when modeling natural phenomena, including  economic dynamics, the importance of discussing a Cournot model with time delay was recognized decades ago  \cite{HR}. Recently, it has  become  an active research area
\cite{EM,GGS,GMS,MS,MSY,O}.
In \cite{GR}, a Cournot oligopoly model with tax evasion was introduced, and subsequently studied, in the duopoly setting, by other authors {\cite{BNO,IM,N}}, where in addition, the introduction of a {time delay}  was made.The time delay in the Cournot duopoly model is justified just 
by considering  that there is a first a firm in the duopoly which enters the market  followed by the second firm entering the market some time later.
Besides the two classical variables in the model representing the supplies of the two competitors, the modeling of tax evasion has introduced two new variables representing the declared revenue upon which tax due is calculated. This increment in the number variables in the model establishes new mathematical challenges. 
The aim of this paper is to provide an abstract framework in which it is convenient to establish general conditions for stable equilibrium of a Cournot duopoly model with tax evasion and time delay. The conditions provided in this paper turn out to be so robust that in fact we are able to prove not only stability of the equilibrium but asymptotic stability and independence of the time delay as well. A somewhat similar result was obtained in \cite[Theorem~4]{GMS} for a particular duopoly Cournot model with two time delays but without the tax evasion.

The tools used in this paper are based on standard mathematical economics conditions such as strictly decreasing costs and inverse demand functions. What we realized was that, with a few extra mathematical requirements, such as that both competitors have the same first and second derivatives of their cost functions at the equilibrium point, together with a couple of inequalities involving second derivatives of the functions involved in the model, we were able to acquire sufficient conditions for asymptotic stability of the equilibrium and and that such stability was independent of time delay.  To illustrate our results, we also present some standard examples of cost and inverse demand functions along with the constraints needed to fulfill our conditions. With these examples, we expect to show that our conditions, though robust, are quite achievable.

We divide this work in four sections. In Section 1, we will review the Cournot model studied in the paper along some basic results for their use in later sections. We will give the announced conditions for stability and prove our main results in Section 2. In Section~3, we will show examples of cost and inverse demand functions along some constraints to meet the conditions for stability presented in the paper. Finally, we will give conclusions and some interpretations of our results in Section~4.

\section{The model}
In this section we will define and explain the Cournot model studied in this paper and will also  derive some elementary results needed for later. This Cournot model was originally introduced in \cite{GR}. We begin by letting the variable $x_1\geq 0$ and $x_2\geq 0$ denote the quantities produced by each firm in the duopoly, while $z_1\geq 0$ and $z_2\geq 0$ will denote the income each firm declares as revenue to the tax authority. The tax rate is denoted by $0<\sigma<1$ and the probability that the $i$-firm is caught evading taxes is $0< q_i< 1$.  The functions involved in the definition  of the Cournot model are assumed to be twice differentiable with continuous second derivatives. These functions and their standard  mathematical economics properties  are resumed in Table~\ref{table}.

\begin{center}
\begin{table}[h]
\begin{tabular}{|c|c| c|c|} \hline
   \bf Function & \bf Notation & \bf Property  & \bf Inequality \\ \hline
    Inverse demand & $p$ & 
     positive & $p(u)>0$\\ \cline{3-4}
      && strictly decreasing & 
        $p\sp\prime (u)<0$\\ \hline %\cline{3-4}
    %  && strategic substitute &
    %    $\dfrac{\partial^2 x_ip(x_1+x_2)}{\partial x_1 \partial x_2}<0$\\ \hline
        Cost  & $C_i$ &
        nonnegative & $C_i(u)\geq 0$ \\ \cline{3-4}
        && strictly increasing
        & $C_i\sp\prime(u)>0$\\ \hline %\cline{3-4}
    %    && convex &
    %     $C_i\sp{\prime\prime}(u)\geq 0$\\\hline
    Fine &   $F$ & nonnegative
    & $F(u)\geq 0$ \\\cline{3-4} & & strictly increasing &
        $F\sp\prime(u) >0$  \\ \cline{3-4}
    % & & strictly convex &
    %    $F\sp{\prime\prime}(u)>0$\\ \cline{3-4}
        \hline
\end{tabular}
\caption{\label{table}The functions of the Cournot model and their properties}
\end{table}
\end{center}

% The cost function $C_i(x_i)$ for each firm will will be assume to satisfy $C_i(x_i)>0$,  $C_i\sp\prime(x_i)>0$ and $C\sp{\prime\prime}(x)\geq 0$ and has second derivative, that is, $C_i(x_i)$ is a positive  strictly increasing and  convex function.  A penalty  $F(x_ip(x_1+x_2)-z_i)$ is imposed on the $i$-th firm which is caught evading taxes; regarded as a function, we assume $F(x)\geq 0$, $F\sp\prime(x)>0$, $F\sp{\prime\prime}(x)>0$, $F(0)=0$ and has second derivative. 

For $i=1,2$, we define the profit function $P_i$ as the sum of two functions.   The first addend is
\[
\left(1 - q_i\right) \big(x_{i}\, p (x_1+x_2) - C_{i} (x_{i}) - \sigma z_{i}\bigr),
\]
which represent the probability $1-q_i$ of not being caught evading tax times the result of subtracting to the income actual $x_ip(x_1+x_2)-C_i(x_i)$ the tax paid on the declared income $z_i$. The second addend in the profit function $P_i$ is
\[
q_i \, \biggl(\left(1 - \sigma\right) x_{i}\, p (x_1+x_2) - C_{i} (x_{i}) - F \left(x_{i}\, p (x_1+x_2) - z_{i}\right)\biggr), 
\]
which represent the probability $q_i$ of being caught evading tax times the result of subtracting to the income $x_ip(x_1+x_2)-C_i(x_i)$ the tax due $\sigma x_ip(x_1+x_2)$ and a penalty $F(x_ip(x_1+x_2)-z_i)$ on the evaded amount.
Hence, we may rewrite the profit  function $P_i$ corresponding to the $i$-th firm as 
\begin{align}\label{FuncionBeneficio}
P_i=P_{i} \left(x_1,x_2,z_1,z_2\right)  
 & =   \notag \left( 1-q_i\, \sigma\right) x_i\, p\left( x_1 +x_2 \right)- C_i(x_i)   \\ 
 &\qquad -\left(1-q_i\right)\sigma z_i-q_i\, F\left( x_i\, p\left( x_1+x_2 \right) -z_i \right) .
\end{align}
We assume that the $i$-th firm only have the variables $x_i$ and $z_i$ under its control, so that $x_j$ and $z_j$ for $j\not=i$ are regarded as given. Therefore, to maximize profits, we will regard each $P_i$ as function on the two variables $x_i$ and $z_i$. Conditions for maximizing the profit functions $P_1$ and $P_2$ are given in the following.

\begin{proposition}\label{maximum}
  Suppose that $x_i=x_i\sp\ast$ and $z_i=z_i\sp\ast$, for  $i=1,2$, satisfy the following system of four equations
  \begin{align}\label{Cournot}
 \bigl(1-  \sigma \bigr) \frac{\partial}{\partial x_i}\bigl( x_i p \left(x_{1} + x_{2}\right)\bigr)   &= C_i\sp\prime (x_{i})  ,
 \notag \\
  q_iF'\left(x_{i}p (x_{1} + x_{2}) - z_{i}\right) &= \left( 1-q_i\right)\sigma.
\end{align}
For $j\not= i$, fix the values $x_j=x_j\sp\ast$ and $z_j=z_j\sp\ast$ in $P_i$ so that $P_i$ becomes a function on the two variables $(x_i,z_i)$. Then the profit function $P_i$ reaches a local maximum at $x_i=x_i\sp\ast$ and $z_i=z_i\sp\ast$ if %and only if 
\begin{equation*}
F\sp{\prime\prime}(x_i\sp\ast p(x_1\sp\ast +x_2\sp\ast)-z_i\sp\ast)>0 
\hspace*{0.5cm} \text{ and }\hspace*{0.5cm} \dfrac{\partial^2}{\partial x_i^2}\Bigr|_{\substack{x_i=x_i\sp\ast\\z_i=z_i\sp\ast}} \left(x_ip(x_i+x_j\sp\ast)-\frac{C_i(x_i)}{1-\sigma}\right)<0
\end{equation*}.
\end{proposition}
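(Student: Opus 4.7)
The plan is to apply the standard second-derivative test for a local maximum to $P_i$ viewed as a function of the two variables $(x_i,z_i)$, with $x_j\sp\ast$ and $z_j\sp\ast$ held fixed. First I would verify that the system (\ref{Cournot}) is exactly the first-order system $\nabla P_i(x_i\sp\ast,z_i\sp\ast)=0$. A direct computation yields $\partial P_i/\partial z_i = -(1-q_i)\sigma + q_iF\sp\prime(x_ip-z_i)$, whose vanishing is the second equation of (\ref{Cournot}). For the $x_i$-derivative, the coefficient of $\partial(x_ip)/\partial x_i$ that appears is $(1-q_i\sigma)-q_iF\sp\prime(x_ip-z_i)$; using $q_iF\sp\prime=(1-q_i)\sigma$ from the second equation, this coefficient collapses to $1-\sigma$, so the vanishing of $\partial P_i/\partial x_i$ reduces to the first equation of (\ref{Cournot}).

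Next I would compute the three entries of the Hessian of $P_i$ at $(x_i\sp\ast,z_i\sp\ast)$. Writing $A=x_i\sp\ast p(x_1\sp\ast+x_2\sp\ast)-z_i\sp\ast$ and abbreviating $\phi\sp\prime=p+x_i\sp\ast p\sp\prime$ and $\phi\sp{\prime\prime}=2p\sp\prime+x_i\sp\ast p\sp{\prime\prime}$ (all evaluated at $x_1\sp\ast+x_2\sp\ast$), the same substitution $q_iF\sp\prime(A)=(1-q_i)\sigma$ simplifies the Hessian entries to
\[
\frac{\partial^2P_i}{\partial x_i^2}\bigg|_\ast = (1-\sigma)\phi\sp{\prime\prime} - C_i\sp{\prime\prime}(x_i\sp\ast) - q_iF\sp{\prime\prime}(A)(\phi\sp\prime)^2,
\]
\[
\frac{\partial^2P_i}{\partial x_i\partial z_i}\bigg|_\ast = q_iF\sp{\prime\prime}(A)\phi\sp\prime, \qquad \frac{\partial^2P_i}{\partial z_i^2}\bigg|_\ast = -q_iF\sp{\prime\prime}(A).
\]
Note that the second stated hypothesis is precisely $\phi\sp{\prime\prime}-C_i\sp{\prime\prime}(x_i\sp\ast)/(1-\sigma)<0$.

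Finally I would verify both conditions of the second-derivative test for a strict local maximum. That $\partial^2P_i/\partial x_i^2|_\ast<0$ follows by multiplying the hypothesis by $(1-\sigma)>0$ and using that the extra term $-q_iF\sp{\prime\prime}(A)(\phi\sp\prime)^2$ is $\le 0$ by the first hypothesis. For the Hessian determinant, the term $[q_iF\sp{\prime\prime}(A)\phi\sp\prime]^2$ cancels exactly the $q_iF\sp{\prime\prime}(A)(\phi\sp\prime)^2$ contribution in $\partial^2P_i/\partial x_i^2$, giving
\[
\det = -q_iF\sp{\prime\prime}(A)(1-\sigma)\bigl[\phi\sp{\prime\prime}-C_i\sp{\prime\prime}(x_i\sp\ast)/(1-\sigma)\bigr]>0,
\]
by both hypotheses together with $q_i>0$ and $1-\sigma>0$. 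The calculations are routine; the only delicate step is recognizing the equilibrium simplification $(1-q_i\sigma)-q_iF\sp\prime(A)=1-\sigma$, which is what makes the two stated hypotheses align precisely with the two conditions of the second-derivative test, and I do not anticipate any further obstacle.
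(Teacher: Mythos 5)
Your proposal is correct and follows essentially the same route as the paper: verify that the system (\ref{Cournot}) is the first-order condition via the simplification $q_iF\sp\prime=(1-q_i)\sigma$, then apply the second partial derivative test, where the cross-term cancellation reduces the Hessian determinant to $-q_iF\sp{\prime\prime}(1-\sigma)\bigl(\phi\sp{\prime\prime}-C_i\sp{\prime\prime}/(1-\sigma)\bigr)$. The only (immaterial) difference is that you certify negativity of the diagonal via $\partial^2P_i/\partial x_i^2$ while the paper uses $\partial^2P_i/\partial z_i^2$.
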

\begin{proof}
If we fix the values $x_2=x_2\sp\ast$ and $z_2=z_2\sp\ast$ in $P_1$ so that $P_1$ is a function on the two variables $x_1$ and $z_1$ 
then we first  check that $(x_1\sp\ast,z_1\sp\ast)$ is a critical point of $P_1$. For this purpose, we solve the equation $\nabla P_1(x_1,z_1)=0$. We obtain
\begin{align*}
    \dfrac{\partial P_1}{\partial x_1}&= \bigl(1- q_1 \sigma- q_1  F\sp\prime \bigl(x_{1} p \left(x_{1} + x_{2}\sp\ast\right) - z_{1}\bigr)\bigr) \frac{\partial}{\partial x_1}\bigl( x_1 p \left(x_{1} + x_{2}\sp\ast\right)\bigr)  - C'_{1} (x_{1}) =0,\\
     \dfrac{\partial P_1}{\partial z_1}&= -(1-q_1)\sigma +  q_1\, F'\left(x_{1}p (x_{1} + x_{2}\sp\ast) - z_{1}\right) =0.
\end{align*}
From the second equation above we obtain $\sigma= q_1 \sigma+ q_1  F\sp\prime \left(x_{1}\, p \left(x_{1} + x_{2}\sp\ast\right) - z_{1}\right)$ and substituting this $\sigma$ in the first equation we apply the hypothesis (\ref{Cournot}) to conclude that a critical point of $P_i$ is indeed achieved at $x_1=x_1\sp\ast$ and $z_1=z_1\sp\ast$, as wanted. 
To verify that it is  a  relative maximum for $P_i$, by the second partial derivative test, we require two conditions. The first condition is

\begin{equation*}
 -q_1\,F\sp{\prime\prime}              \left(x_1\sp\ast p(x_1\sp\ast+x_2\sp\ast)-z_1\sp\ast\right)=\dfrac{\partial^2 P_1}{\partial z_1^2}\Bigr|_{\substack{x_1=x_1\sp\ast\\z_1=z_1\sp\ast}}<0.
 \end{equation*}
So that this first condition holds   if and only if $F\sp{\prime\prime}(x_1\sp\ast p(x_1\sp\ast+x_2\sp\ast)-z_1\sp\ast)>0$. On the other hand, the second condition is

\begin{align*}
0&<\dfrac{\partial^2 P_1}{\partial x_1^2}\Bigr|_{\substack{x_1=x_1\sp\ast\\z_1=z_1\sp\ast}}\dfrac{\partial^2 P_1}{\partial z_1^2}\Bigr|_{\substack{x_1=x_1\sp\ast\\z_1=z_1\sp\ast}} -
\biggl(  \dfrac{\partial^2 P_1}{\partial x_1\partial z_1}\Bigr|_{\substack{x_1=x_1\sp\ast\\z_1=z_1\sp\ast}} \biggr)^2 \\
& = 
\biggl( (1-\sigma)\dfrac{\partial^2}{\partial x_1^2}\Bigr|_{\substack{x_1=x_1\sp\ast\\z_1=z_1\sp\ast}}\left(x_1p(x_1+x_2\sp\ast)-\frac{C_1(x_1)}{1-\sigma}\right) \\
&\qquad
 -q_1 F\sp{\prime\prime} \left( x_1\sp\ast p\left(x_1\sp\ast+x_2\sp\ast\right)-z_1\sp\ast \right)\biggl(\frac{\partial}{\partial x_1}\Bigr|_{\substack{x_1=x_1\sp\ast\\z_1=z_1\sp\ast}}x_1p\left(x_1+x_2\sp\ast\right)
 \biggr)^2 \biggr)%\\
%&\quad
\, \biggl( -q_1\,F\sp{\prime\prime}              \left(x_1\sp\ast p(x_1\sp\ast +x_2\sp\ast)-z_1\sp\ast\right) \biggr)\\
&\quad -\biggl( q_1 F\sp{\prime\prime} \left(  x_1\sp\ast p(x_1\sp\ast+x_2\sp\ast)-z_1\sp\ast \right)\frac{\partial}{\partial x_1}\Bigr|_{\substack{x_1=x_1\sp\ast\\z_1=z_1\sp\ast}}x_1p\left(x_1+x_2\sp\ast\right) \biggr)^2\\
&=- q_1 (1-\sigma) F\sp{\prime\prime} \left(  x_1\sp\ast p(x_1\sp\ast+x_2\sp\ast)-z_1\sp\ast \right)\dfrac{\partial^2}{\partial x_1^2}\Bigr|_{\substack{x_1=x_1\sp\ast\\z_1=z_1\sp\ast}}\left(x_1p(x_1+x_2\sp\ast)-\frac{C_1(x_1)}{1-\sigma}\right). \\
\end{align*}
 Since the second derivative of $F$ is positive by hypothesis, the second condition  holds if and only if $\dfrac{\partial^2}{\partial x_1^2}\Bigr|_{\substack{x_1=x_1\sp\ast\\z_1=z_1\sp\ast}}\left(x_1p(x_1+x_2\sp\ast)-\frac{C_1(x_1)}{1-\sigma}\right)<0$.  This proves the maximality assertion for $P_1$. The proof of the assertion for $P_2$ is analogous.
\end{proof}

For our study we will request the following two inequalities, for each $i=1,2$. All functions are assumed to be evaluated at the points $x_i={x_i}\sp\ast$ and $z_i={z_i}\sp\ast$ which are the solution of the system of equations~(\ref{Cournot}).

\begin{align}
%  \label{In1}       %%%%In1
%F\sp{\prime\prime}(x_i p(x_1+x_2)-z_i)&>0 \text{ omit- follow from 4 and 5}
%   \\ 
   \label{In2}   %%%%In2
   \frac{\partial^2P_i}{\partial x_i^2}\frac{\partial^2 P_i}{\partial z_i^2}- \left(\frac{\partial^2 P_i}{\partial x_i\partial z_i}\right)^2
    &> \left|\frac{\partial^2P_i}{\partial x_1\partial x_2}\frac{\partial^2 P_i}{\partial z_i^2}-\frac{\partial^2 P_i}{\partial x_1\partial z_i}
    \frac{\partial^2 P_i}{\partial x_2\partial z_i}\right| \\ \label{In3} %%%%In3
    -\frac{\partial^2 P_i}{\partial x_i^2}&\geq \left| \frac{\partial^2 P_i}{\partial x_1\partial x_2} \right|
\end{align}

The inequalities (\ref{In2}) and (\ref{In3}) in fact imply $\dfrac{\partial^2 P_i}{\partial z_i^2}<0$ and so, by Proposition~\ref{maximum}, they
are sufficient conditions for  $P_i$ to reach a maximum at $x_i={x_i}\sp\ast$ and $z_i={z_i}\sp\ast$.  On the other hand, they turn out to be useful in our proofs of stability and, fortunately, they turn out to be satisfied by many standard examples, as it will be shown in Section~\ref{examples}. In the following two lemmas we will establish sufficient conditions for inequalities (\ref{In2}) and (\ref{In3}) to hold.

\iffalse
An equivalent way to present the strategic substitute condition is given next.
 
\begin{equation}\label{SustitutoEstrategico2}
     p\sp\prime(x_1+x_2)+x_ip\sp{\prime\prime}(x_1+x_2)<0. 
\end{equation}

It is possible to rewrite the system of equations~(\ref{Cournot}) in Proposition~\ref{maximum} as
\begin{align*}
    (1-\sigma)\dfrac{\partial}{\partial x_i}\bigl(x_ip(x_1+x_2)\bigr)
    &=C_{i}^{\prime}(x_i)\\
    \dfrac{\partial}{\partial z_i}F\left(x_ip(x_i+x_i)-z_i\right)
    &=
       \frac{\sigma\left(q_i-1\right)}{q_i}.
\end{align*}
The first equation above states that, at the equilibrium point, the marginal cost and the marginal profit, after tax, coincide in each firm, as was expected from the theory of Cournot models. The second equation, since its right hand side  is negative, establishes that  the penalty a competitor has to pay for tax evasion will decrease when the declared income amount $z_i$ is raised, that is, honesty lowers the fine. 
\fi

\begin{lemma}\label{lemma1}
If $\dfrac{\partial^2 x_ip(x_1+x_2)}{\partial x_1 \partial x_2}\leq 0$ and $C_i\sp{\prime\prime}(x_i)\geq 0$
then 
\begin{equation}\label{Inxi2>x1x2}
- \dfrac{\partial^2}{\partial x_i^2}\left(x_ip(x_1+x_2)-\frac{C_i(x_i)}{1-\sigma}\right)>\left|\frac{\partial^2}{\partial x_1\partial x_2} x_ip(x_1+x_2) \right|.
\end{equation}
If in addition we assume $F\sp{\prime\prime}(x_i p(x_1+x_2)-z_i)>0$ then  inequalities~(\ref{In2}) and (\ref{Inxi2>x1x2}) are equivalent.

\end{lemma}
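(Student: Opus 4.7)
The plan is to treat the two assertions separately. The first inequality~(\ref{Inxi2>x1x2}) follows from an elementary computation involving only $p'$ and $C_i''$; the equivalence between~(\ref{In2}) and~(\ref{Inxi2>x1x2}) will follow by expanding both sides of~(\ref{In2}) using explicit formulas for the second partial derivatives of $P_i$ at the equilibrium and observing that each side reduces to its counterpart in~(\ref{Inxi2>x1x2}) multiplied by the positive factor $q_i(1-\sigma)F''$.

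For the first assertion, setting $s=x_1+x_2$, direct differentiation gives
\[
\frac{\partial^2(x_ip(s))}{\partial x_i^2}=2p'(s)+x_ip''(s),\qquad \frac{\partial^2(x_ip(s))}{\partial x_1\partial x_2}=p'(s)+x_ip''(s),
\]
whose difference equals $p'(s)<0$. Because the hypothesis $\frac{\partial^2(x_ip)}{\partial x_1\partial x_2}\leq 0$ forces its absolute value to coincide with its negative, one obtains $-\frac{\partial^2(x_ip)}{\partial x_i^2} > \left|\frac{\partial^2(x_ip)}{\partial x_1\partial x_2}\right|$. Adding the nonnegative quantity $C_i''(x_i)/(1-\sigma)$ to the left-hand side preserves the strict inequality and yields~(\ref{Inxi2>x1x2}).

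For the equivalence, I rely on the formulas for the second partial derivatives of $P_i$ that follow directly from~(\ref{FuncionBeneficio}). At the equilibrium the second equation of~(\ref{Cournot}) reduces the coefficient $1-q_i(\sigma+F')$ to $1-\sigma$, so that, writing $A=\frac{\partial^2}{\partial x_i^2}\!\left(x_ip-\frac{C_i}{1-\sigma}\right)$ and $B=\frac{\partial^2(x_ip)}{\partial x_1\partial x_2}$, the relevant derivatives take the simplified forms
\[
\frac{\partial^2 P_i}{\partial x_i^2}=(1-\sigma)A-q_i(p+x_ip')^2F'',\qquad \frac{\partial^2 P_i}{\partial x_1\partial x_2}=(1-\sigma)B-q_ix_ip'(p+x_ip')F'',
\]
\[
\frac{\partial^2 P_i}{\partial z_i^2}=-q_iF'',\qquad \frac{\partial^2 P_i}{\partial x_i\partial z_i}=q_i(p+x_ip')F'',\qquad \frac{\partial^2 P_i}{\partial x_j\partial z_i}=q_ix_ip'F''.
\]
Substitution shows that the two terms containing $(F'')^2$ cancel on each side of~(\ref{In2}), leaving the left-hand side equal to $-q_i(1-\sigma)F''A$ and the right-hand side equal to $q_i(1-\sigma)F''|B|$. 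Since $q_i(1-\sigma)F''>0$, dividing through establishes equivalence with $-A>|B|$, which is precisely~(\ref{Inxi2>x1x2}). The main technical obstacle is bookkeeping the algebra to confirm that the $(F'')^2$ cancellations do occur on both sides; this is a transparent consequence of the common factor $q_iF''$ shared by the mixed derivatives involving $z_i$, but it is what makes the equivalence clean enough to be useful.
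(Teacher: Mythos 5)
Your proof is correct and follows essentially the same route as the paper: the first inequality via the identity $\frac{\partial^2(x_ip)}{\partial x_i^2}-\frac{\partial^2(x_ip)}{\partial x_1\partial x_2}=p'<0$ plus the sign hypotheses, and the equivalence via the cancellation of the $(F'')^2$ terms, which reduces both sides of~(\ref{In2}) to $-q_i(1-\sigma)F''$ times the corresponding quantity in~(\ref{Inxi2>x1x2}). The only difference is that you write out explicitly the computation the paper delegates to the proof of Proposition~\ref{maximum} and to ``a similar computation shows.''
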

\begin{proof}
We have $\dfrac{\partial^2x_ip(x_1+x_2)}{\partial x_i^2}=\dfrac{\partial^2x_ip(x_1+x_2)}{\partial x_1\partial x_2}+p\sp\prime(x_1+x_2)$, and by assumption $p\sp\prime(x_1+x_2)<0$. Therefore
\begin{equation}\label{sum1}
-\dfrac{\partial^2x_ip(x_1+x_2)}{\partial x_i^2}>-\dfrac{\partial^2x_ip(x_1+x_2)}{\partial x_1\partial x_2}=\left|\dfrac{\partial^2x_ip(x_1+x_2)}{\partial x_1\partial x_2}\right|.
\end{equation}
On the other hand, since
$1-\sigma>0$ and by hypothesis $C\sp{\prime\prime}(x_i)\geq 0$,  we get $\dfrac{C\sp{\prime\prime}(x_i)}{1-\sigma}\geq 0$. 
By adding this last inequality on the left hand side of inequality~(\ref{sum1}) above we get inequality~(\ref{Inxi2>x1x2}),   as desired.
For the last assertion of the lemma, as shown in the proof of Proposition~\ref{maximum} we have,
\begin{equation*}
    \dfrac{\partial^2 P_i}{\partial x_i^2}\dfrac{\partial^2 P_i}{\partial z_i^2} -
\biggl(  \dfrac{\partial^2 P_i}{\partial x_i\partial z_i} \biggr)^2 =- q_i (1-\sigma) F\sp{\prime\prime} \left(  x_i p(x_1+x_2)-z_1 \right)\dfrac{\partial^2}{\partial x_i^2}\left(x_ip(x_1+x_2)-\frac{C_i(x_i)}{1-\sigma}\right).
\end{equation*}
A similar computation shows
\begin{equation*}
    \frac{\partial^2P_i}{\partial x_1\partial x_2}\frac{\partial^2 P_i}{\partial z_i^2}- \frac{\partial^2 P_i}{\partial x_1\partial z_i}
    \frac{\partial^2 P_i}{\partial x_2\partial z_i}=-q_i(1-\sigma)F\sp{\prime\prime}(x_i p(x_1+x_2)-z_1)\frac{\partial^2}{\partial x_1\partial x_2} x_ip(x_1+x_2).
\end{equation*}
Thus, if $F\sp{\prime\prime}(x_i p(x_1+x_2)-z_i)>0$  then since also $q_i(1-\sigma)>0$ the desired equivalence is inmmediate from the last two equations.
\end{proof}

 The inequality $\dfrac{\partial^2 x_ip(x_1+x_2)}{\partial x_1 \partial x_2}<0$ in Lemma~\ref{lemma1} is referred as an strategic substitute condition  in \cite[Page~494]{Bulow}. This condition suggests that the optimal response of the $i$-firm in the duopoly is to be less aggressive when its competitor makes an aggressive play.

\begin{lemma}\label{lemma2}
  If $F\sp{\prime\prime}(x_i p(x_1+x_2)-z_i)>0$ and inequality  (\ref{In2}) hold then $-\dfrac{\partial^2 P_i}{\partial x_i^2}\geq - \dfrac{\partial^2 P_i}{\partial x_1\partial x_2} $
  If in addition we assume inequalities (\ref{Inxi2>x1x2}) and   $\left(\frac{\partial }{\partial x_1}+\frac{\partial}{\partial x_2}\right)x_ip(x_1+x_2)\geq 0$ then inequality (\ref{In3}) holds.
\end{lemma}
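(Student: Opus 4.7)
The plan is to compute both $-\partial^2 P_i/\partial x_i^2 + \partial^2 P_i/\partial x_1\partial x_2$ and $-\partial^2 P_i/\partial x_i^2 - \partial^2 P_i/\partial x_1\partial x_2$ explicitly at the equilibrium point $(x_i\sp\ast,z_i\sp\ast)$, and to use the two equations of (\ref{Cournot}) together with Lemma~\ref{lemma1} to control the contribution of $C_i\sp{\prime\prime}$. The second equation of (\ref{Cournot}) yields $1 - q_i\sigma - q_i F\sp\prime(x_i\sp\ast p - z_i\sp\ast) = 1 - \sigma$, so after differentiating (\ref{FuncionBeneficio}) twice and substituting, the formulas for the second partials of $P_i$ carry the factor $1-\sigma$ in front of the $p$-derivative terms; the first equation of (\ref{Cournot}) gives the identity $p + x_i p\sp\prime = C_i\sp\prime/(1-\sigma) > 0$ that will be used repeatedly.

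For the first assertion a direct computation (using the above reductions) produces
\[
-\frac{\partial^2 P_i}{\partial x_i^2} + \frac{\partial^2 P_i}{\partial x_1 \partial x_2}
= -(1-\sigma)\, p\sp\prime + C_i\sp{\prime\prime} + q_i\, p\,(p + x_i p\sp\prime)\, F\sp{\prime\prime}.
\]
The first and third terms are strictly positive by $p\sp\prime<0$, $p>0$, $F\sp{\prime\prime}>0$ and the identity above. By Lemma~\ref{lemma1}, under $F\sp{\prime\prime}>0$ the hypothesis (\ref{In2}) is equivalent to (\ref{Inxi2>x1x2}), which (after multiplying by $1-\sigma$) reads $C_i\sp{\prime\prime} > (1-\sigma)\bigl[(2p\sp\prime + x_i p\sp{\prime\prime}) + |p\sp\prime + x_i p\sp{\prime\prime}|\bigr]$. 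Splitting into cases according to the sign of $p\sp\prime + x_i p\sp{\prime\prime}$ yields either $C_i\sp{\prime\prime} > (1-\sigma)(3p\sp\prime + 2 x_i p\sp{\prime\prime})$ or $C_i\sp{\prime\prime} > (1-\sigma) p\sp\prime$; in both cases substitution turns the displayed expression into a sum of manifestly nonnegative quantities.

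For the second assertion, Part~1 already settles the subcase $\partial^2 P_i/\partial x_1\partial x_2 \leq 0$, since then $|\partial^2 P_i/\partial x_1\partial x_2| = -\partial^2 P_i/\partial x_1\partial x_2$ and (\ref{In3}) is exactly the conclusion of Part~1. In the remaining subcase one needs $-\partial^2 P_i/\partial x_i^2 - \partial^2 P_i/\partial x_1\partial x_2 \geq 0$, and the analogous computation gives
\[
-\frac{\partial^2 P_i}{\partial x_i^2} - \frac{\partial^2 P_i}{\partial x_1 \partial x_2}
= -(1-\sigma)(3p\sp\prime + 2x_i p\sp{\prime\prime}) + C_i\sp{\prime\prime} + q_i(p+x_i p\sp\prime)(p + 2x_i p\sp\prime) F\sp{\prime\prime}.
\]
The extra hypothesis $(\partial/\partial x_1 + \partial/\partial x_2)(x_i p) = p + 2 x_i p\sp\prime \geq 0$ makes the last term nonnegative. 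In the branch $p\sp\prime + x_i p\sp{\prime\prime} \geq 0$ the bound from (\ref{Inxi2>x1x2}) gives $C_i\sp{\prime\prime} > (1-\sigma)(3p\sp\prime + 2x_i p\sp{\prime\prime})$ directly; in the opposite branch, $3p\sp\prime + 2 x_i p\sp{\prime\prime} = p\sp\prime + 2(p\sp\prime + x_i p\sp{\prime\prime}) < p\sp\prime$, whence $C_i\sp{\prime\prime} > (1-\sigma) p\sp\prime > (1-\sigma)(3p\sp\prime + 2 x_i p\sp{\prime\prime})$ as well, and (\ref{In3}) follows.

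The main obstacle is the bookkeeping forced by the absolute value in (\ref{Inxi2>x1x2}): each of the two candidate lower bounds on $C_i\sp{\prime\prime}$ must be shown to dominate the corresponding $(1-\sigma)$-multiple of a polynomial in the first two derivatives of $p$, and in the second assertion one must additionally notice the algebraic observation $3p\sp\prime + 2 x_i p\sp{\prime\prime} < p\sp\prime$ valid precisely in the branch $p\sp\prime + x_i p\sp{\prime\prime} < 0$, in order to reconcile the weaker bound on $C_i\sp{\prime\prime}$ with the stronger quantity that needs to be beaten.
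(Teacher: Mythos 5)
Your proof is correct, but it takes a genuinely different route from the paper on the first assertion. The paper stays at the level of the profit function: it shows $\left(\frac{\partial^2 P_i}{\partial x_i\partial z_i}\right)^2 > \frac{\partial^2 P_i}{\partial x_1\partial z_i}\frac{\partial^2 P_i}{\partial x_2\partial z_i}$ (the difference being $q_i^2(F\sp{\prime\prime})^2\,p\,(p+x_ip\sp\prime)>0$, the same quantity that appears as your ``third term''), feeds this into (\ref{In2}), and then divides by the negative quantity $\frac{\partial^2 P_i}{\partial z_i^2}$ to flip the inequality --- no explicit formulas for the second partials and no case analysis are needed. You instead pass through Lemma~\ref{lemma1} to trade (\ref{In2}) for (\ref{Inxi2>x1x2}) (legitimate, since the equivalence there only needs $F\sp{\prime\prime}>0$), write everything out in terms of $p\sp\prime$, $p\sp{\prime\prime}$, $C_i\sp{\prime\prime}$ using the equilibrium identities $1-q_i\sigma-q_iF\sp\prime=1-\sigma$ and $p+x_ip\sp\prime=C_i\sp\prime/(1-\sigma)$, and unpack the absolute value into two branches; your displayed formulas and both branch estimates check out. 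On the second assertion the two arguments are essentially the same computation: the paper keeps the expression in the form $-(1-\sigma)\frac{\partial^2}{\partial x_i^2}\bigl(x_ip-\frac{C_i}{1-\sigma}\bigr)-(1-\sigma)\frac{\partial^2}{\partial x_1\partial x_2}x_ip+q_i\frac{C_i\sp\prime}{1-\sigma}\bigl(p+2x_ip\sp\prime\bigr)F\sp{\prime\prime}$ and bounds the first two terms directly by (\ref{Inxi2>x1x2}) (so $-A-B\geq -A-|B|>0$), whereas you expand in $p\sp\prime,p\sp{\prime\prime}$ and redo the branch analysis, with the observation $3p\sp\prime+2x_ip\sp{\prime\prime}<p\sp\prime$ in the branch $p\sp\prime+x_ip\sp{\prime\prime}<0$ playing the role of the triangle inequality; your initial case split on the sign of $\frac{\partial^2 P_i}{\partial x_1\partial x_2}$ is harmless but unnecessary, since proving both signed inequalities (as you in fact do) already gives (\ref{In3}). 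The paper's version is shorter and avoids committing to explicit derivative formulas; yours makes the role of each economic quantity ($p\sp\prime$, $C_i\sp{\prime\prime}$, the marginal revenue $p+x_ip\sp\prime$) visible term by term.
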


\begin{proof}
As was observed in the proof of Proposition~\ref{maximum}, inequality $F\sp{\prime\prime}(x_i p(x_1+x_2)-z_i)>0$ implies $\dfrac{\partial^2 P_i}{\partial z_i^2}<0$. On the other hand, we have
\begin{align*}
\left(\frac{\partial^2 P_i}{\partial x_i\partial z_i}  \right)^2 
    &= q_i^2 (F\sp{\prime\prime}(x_i p(x_1+x_2)-z_i))^2\left(\frac{\partial}{\partial x_i}x_ip(x_1+x_2)\right)^2 \\
    &=\frac{\partial^2 P_i}{\partial x_1 \partial z_i}\frac{\partial^2 P_i}{\partial x_2\partial z_i}+
    q_i^2F\sp{\prime\prime}(x_i p(x_1+x_2)-z_i)p(x_1+x_2)\left(\frac{\partial}{\partial x_i}x_ip(x_1+x_2)\right)\\
    &>\frac{\partial^2 P_i}{\partial x_1 \partial z_i}\frac{\partial^2 P_i}{\partial x_2\partial z_i}
\end{align*}
Using inequality~(\ref{In2}) and the last equality we then get
\begin{align*}
    \frac{\partial^2P_i}{\partial x_i^2}
    \frac{\partial^2 P_i}{\partial z_i^2}
    &> \frac{\partial^2P_i}{\partial x_1\partial x_2}
    \frac{\partial^2 P_i}{\partial z_i^2}
    -\frac{\partial^2 P_i}{\partial x_1 \partial z_i}\frac{\partial^2 P_i}{\partial x_2\partial z_i}
    +\left(\frac{\partial^2 P_i}{\partial x_i\partial z_i}\right)^2\\
    &> \frac{\partial^2P_i}{\partial x_1\partial x_2}
    \frac{\partial^2 P_i}{\partial z_i^2}
\end{align*}
Thus $-\dfrac{\partial^2P_i}{\partial x_i^2}>-\dfrac{\partial^2P_1}{\partial x_2\partial x_1}$.

Finally if we assume inequalities (\ref{Inxi2>x1x2}) and   $\left(\frac{\partial }{\partial x_1}+\frac{\partial}{\partial x_2}\right)x_ip(x_1+x_2)\geq 0$ we get
\begin{align*}
-\frac{\partial^2 P_i}{\partial x_i^2}- \frac{\partial^2 P_i}{\partial x_2\partial x_1}
    &=
    - (1-\sigma) \dfrac{\partial^2}{\partial x_i^2}\left(x_ip(x_1+x_2)-\frac{C_i(x_i)}{1-\sigma}
    \right)
    - (1-\sigma)\frac{\partial^2}{\partial x_1\partial x_2} x_ip(x_1+x_2)\\
    &\qquad
    + q_1\frac{C_i\sp\prime(x_i)}{1-\sigma}\left( \frac{\partial x_ip(x_1+x_2) }{\partial x_1}+\frac{\partial x_ip(x_1+x_2)}{\partial x_2}\right) F\sp{\prime\prime} \left(x_1p(x_1+x_2)-z_i\right)\\
    &>0.
\end{align*}
\end{proof}

We resume lemmas~\ref{lemma1} and \ref{lemma2} in the following.

\begin{corollary}\label{cor}
Suppose that for $i=1,2$ the following four inequalities hold. $F\sp{\prime\prime}(x_i p(x_1+x_2)-z_i)>0$ 
$C_i\sp{\prime\prime}(x_i)\geq 0$, $\dfrac{\partial^2 x_ip(x_1+x_2)}{\partial x_1 \partial x_2}\leq 0$,  and $\left(\frac{\partial }{\partial x_1}+\frac{\partial}{\partial x_2}\right)x_ip(x_1+x_2)\geq 0$. Then the inequalities (\ref{In2}) and (\ref{In3}) also hold.
\end{corollary}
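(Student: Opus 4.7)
The plan is to derive the corollary directly by chaining Lemma~\ref{lemma1} and Lemma~\ref{lemma2}, since the four hypotheses of the corollary are precisely what is needed to feed into those lemmas in sequence. I would not reprove anything from scratch; instead I would invoke the lemmas as black boxes and check that their hypotheses are satisfied at each stage.

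First, I would apply Lemma~\ref{lemma1}. Its hypotheses are exactly $\dfrac{\partial^2 x_ip(x_1+x_2)}{\partial x_1 \partial x_2}\leq 0$ and $C_i\sp{\prime\prime}(x_i)\geq 0$, which are two of our four assumptions. The conclusion is inequality~(\ref{Inxi2>x1x2}). Then, invoking the second half of Lemma~\ref{lemma1}, which requires in addition $F\sp{\prime\prime}(x_i p(x_1+x_2)-z_i)>0$ (our first hypothesis), the inequalities (\ref{In2}) and (\ref{Inxi2>x1x2}) are equivalent. This immediately yields (\ref{In2}).

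Next, I would apply Lemma~\ref{lemma2}. For the first conclusion of Lemma~\ref{lemma2} we need $F\sp{\prime\prime}>0$ and (\ref{In2}); both are now in hand. For the second conclusion, which is (\ref{In3}), we additionally need (\ref{Inxi2>x1x2}) (obtained in the previous step) and $\left(\dfrac{\partial}{\partial x_1}+\dfrac{\partial}{\partial x_2}\right)x_ip(x_1+x_2)\geq 0$ (our fourth hypothesis). Therefore (\ref{In3}) follows.

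There is really no obstacle to overcome here: the corollary is a bookkeeping statement that packages the two lemmas into a single, easy-to-apply criterion stated purely in terms of elementary convexity/monotonicity conditions on $F$, $C_i$, and $p$. The only care required is to verify that the hypotheses of Lemma~\ref{lemma2} are cumulative with those of Lemma~\ref{lemma1}, which is clear by inspection.
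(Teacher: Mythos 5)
Your proof is correct and is exactly the paper's intent: the corollary is introduced with the phrase ``We resume lemmas~\ref{lemma1} and \ref{lemma2} in the following,'' so the official argument is precisely the chaining you describe---use Lemma~\ref{lemma1} to get (\ref{Inxi2>x1x2}) and hence (\ref{In2}) via the stated equivalence under $F\sp{\prime\prime}>0$, then feed (\ref{In2}), (\ref{Inxi2>x1x2}) and the remaining hypothesis into Lemma~\ref{lemma2} to obtain (\ref{In3}). No gaps.
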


Throughout the rest of the paper, we will assume inequalities  (\ref{In2}) and (\ref{In3}).

We now introduce the delay in Cournot duopoly model with tax evasion. We begin by assuming, for $i=1,2$, that $x_i(t)$ is the production of the $i$-firm at the time $t$. Similarly, $z_i(t)$ is the declared revenue of the $i$-firm at the time $t$. Let $\tau\geq 0$ denote the delay.  Define the new variables ${x_{i}}\sb\tau(t)= x_i(t-\tau)$ and ${z_{i}}\sb\tau(t)=z_i(t-\tau)$. %$\vec{x}(t)=(x_1(t),x_2(t),z_1(t),z_2(t))$ and let $\vec{x}\sb\tau(t)$=$({x_{1}}\sb\tau(t),{x_{2}}\sb\tau(t),{z_{1}}\sb\tau(t),{z_{2}}\sb\tau(t))$ where
 The profit function $P_1$  remains unchanged, as given in equation~(\ref{FuncionBeneficio}), since the delay does not affect the first firm.
 %, that is
%
%\begin{equation*}
%P_{1} \left(\vec{x},{\vec{x}}\sb\tau\right)  
%=
%P_1\left(x_{1}, {x_{1}}\sb\tau, z_{1}, %{z_{1}}\sb\tau\right).
%\end{equation*}
%
For the case of profit function $P_2$, it is modified to reflect that the second firm enters the market after a delay $\tau\geq 0$, so that $x_1$ is replaced by ${x_{1}}\sb\tau$:
\begin{align}\label{P2Retardo}
P_{2} &  = \left(1 - q_2\right) \bigl(x_{2}\, p \left({x_1}\sb\tau+x_2\right) - C_{2} (x_{2}) - \sigma z_{2}\bigr)   
 \notag  \\ 
 &     \quad
+ \, q_2 \, \biggl(\left(1 - \sigma\right) x_{2}\, p \left({x_1}\sb\tau+x_2\right) - C_{2} (x_{2}) - F \bigl( x_{2}\, p \left({x_1}\sb\tau+x_2\right) - z_{2} \bigr)\biggr).   
\end{align}

We may regard $P_i$ as a function which depends on four variables  $\vec{x}=(x_1(t),x_2(t),z_1(t),z_2(t))$ and four delayed variables ${\vec{x}_{\tau}}=( {x_{1}}\sb\tau,{x_{2}}\sb\tau,{z_{1}}\sb\tau,{z_{2}}\sb\tau)$. 
  To describe $x_i\sp \prime(t)$ and $z_i\sp\prime(t)$, the variations of the production  and  the declared revenue of the competitors over time $t$, we will follow a gradient dynamics approach, that is to say, we assume that the firms adjust their outputs and their declared revenue proportionally to the rate of change of their profits. In other words:
\begin{equation*}
    \dfrac{dx_i}{dt}=k_i\dfrac{\partial P_i}{\partial x_i}\hspace*{1cm} \text{ and } \hspace*{1cm}
    \dfrac{dz_i}{dt}=k_{i+2}\dfrac{\partial P_i}{\partial z_i},
\end{equation*}
where $k_1,\ k_2,\ k_3,\ k_4>0$ are constants. The delay Cournot duopoly with tax evasion is then
\begin{align}\label{CournotRetardo}
   \dfrac{d\vec{x}}{dt}= 
\left( k_1\dfrac{\partial P_1}{ \partial x_1}(\vec{x},\vec{x}\sb\tau),
k_2\dfrac{\partial P_2}{ \partial x_2}(\vec{x},\vec{x}\sb\tau),
k_3\dfrac{\partial P_1}{ \partial z_1}(\vec{x},\vec{x}\sb\tau),
k_4\dfrac{\partial P_2}{ \partial z_2}(\vec{x},\vec{x}\sb\tau)
 \right).
 \end{align}
Notice that when $\tau=0$, the fixed point $(x_1\sp\ast,x_2\sp\ast,z_1\sp\ast,z_2\sp\ast)$ of the dynamical system (\ref{CournotRetardo})  is precisely  the  point computed in Proposition~\ref{maximum} where the profit functions $P_1$ and $P_2$ reach their maxima. In fact, notice that  the partial derivative of the function $P_2$ as defined in (\ref{FuncionBeneficio}) with respect to $x_1$  is equal to the the partial derivative of $P_2$, as defined in (\ref{P2Retardo}), with respect to ${x_1}\sb\tau$ when both are evaluated at the fixed point.

\begin{proposition}\label{formulas}
The quasipolynomial associated to the linearization of the delay Cournot duopoly model defined in (\ref{CournotRetardo}) at the fixed point $(x_1\sp\ast,x_2\sp\ast,z_1\sp\ast,z_2\sp\ast)$ which satisfy the system of equation (\ref{Cournot}) in Proposition~\ref{maximum} is given by the formula
\begin{equation}\label{PolCarac}
 Q\sb\tau(\lambda)=p_1\left(\lambda\right) p_2\left(\lambda\right) - e^{-\lambda\tau} g_1(\lambda)g_2(\lambda),
\end{equation}
\noindent
where 

$p_i(\lambda)= \lambda^2- \left(
  k_i \dfrac{\partial^2P_i}{\partial x_i^2}+     
  k_{i+2} \dfrac{\partial^2P_i}{\partial z_i^2}
  \right)\lambda +
  k_i \dfrac{\partial^2P_i}{\partial x_i^2}
  k_{i+2} \dfrac{\partial^2 P_i}{ \partial z_i^2}-
  k_i \dfrac{\partial^2P_i}{\partial x_i\partial z_i}
  k_{i+2} \dfrac{\partial^2 P_i}{\partial {x_i} \partial z_i}$

and

$g_i(\lambda)= k_{i} \dfrac{\partial^2P_i}{\partial x_2\partial x_1}\lambda -  k_i \dfrac{\partial^2P_i}{\partial x_2\partial x_1} k_{i+2} \dfrac{\partial^2 P_i}{\partial z_i^2} +  k_i\dfrac{\partial^2P_i}{\partial z_i\partial x_1}  k_{i+2}\dfrac{\partial^2 P_i}{\partial {x_2} \partial z_i}.$  

\iffalse  
 and
  
  $g_2(\lambda)=k_2 \dfrac{\partial^2 P_2}{\partial {x_1}\sb\tau \partial x_2}\lambda - k_2 \dfrac{\partial^2 P_2}{\partial {x_1}\sb\tau \partial x_2}k_4\dfrac{\partial^2 P_2}{\partial z_2^2} + k_2 \dfrac{\partial^2 P_2}{\partial {z_2} \partial x_2}  k_4 \dfrac{\partial^2 P_2}{\partial {x_1}\sb\tau \partial z_2}$.
\fi  
\end{proposition}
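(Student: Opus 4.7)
The plan is to linearize the delay system (\ref{CournotRetardo}) at the equilibrium, write the characteristic matrix in a convenient block form, and then expand the determinant by exploiting its sparsity.

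First I would record the two Jacobians. The right-hand side of (\ref{CournotRetardo}) depends on the non-delayed state $\vec{x}(t)$ through $P_1$ and through the derivatives of $P_2$ with respect to $x_2$ and $z_2$; the only place where a delayed variable enters is the appearance of $x_1(t-\tau)$ inside $P_2$. Consequently, if I order the variables as $(x_1, z_1, x_2, z_2)$, then the non-delayed Jacobian $J$ is block upper triangular with $2\times 2$ diagonal blocks, while the delayed Jacobian $K$ has all entries zero except in the first column and in the rows corresponding to $\dot x_2$ and $\dot z_2$, namely $k_2 \frac{\partial^2 P_2}{\partial x_2 \partial x_1}$ and $k_4 \frac{\partial^2 P_2}{\partial z_2 \partial x_1}$ evaluated at the equilibrium. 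Using the symmetry $\frac{\partial^2 P_i}{\partial x_i \partial z_i} = \frac{\partial^2 P_i}{\partial z_i \partial x_i}$, a direct $2\times 2$ computation shows that the characteristic polynomials of the diagonal blocks of $J$ are precisely $p_1(\lambda)$ and $p_2(\lambda)$.

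Next I would compute $\det M_\tau(\lambda)$ where $M_\tau(\lambda) = \lambda I - J - K e^{-\lambda\tau}$. Because $e^{-\lambda\tau}$ appears only in the first column of $M_\tau(\lambda)$, I would split that column into its $\tau$-independent piece plus a piece proportional to $e^{-\lambda\tau}$ and invoke multilinearity of the determinant in columns. The $\tau$-independent piece restores $\lambda I - J$, whose block upper triangular form produces $p_1(\lambda)p_2(\lambda)$. For the $e^{-\lambda\tau}$-piece, the first column now contains only two nonzero entries (in rows three and four); a Laplace expansion along that column yields two $3\times 3$ minors. Each minor has a column with a single nonzero entry, a zero pattern that comes from the fact that $P_1$ does not depend on $z_2$, so each minor further reduces to the product of a scalar with a $2\times 2$ determinant, and the resulting $2\times 2$ determinant evaluates to $g_1(\lambda)$ in both cases. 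Collecting the two Laplace terms and tracking the cofactor signs, the coefficient of $e^{-\lambda\tau}$ combines into $-g_1(\lambda)g_2(\lambda)$, and (\ref{PolCarac}) follows. The only obstacle I foresee is purely clerical: keeping the subscripts, the constants $k_1, k_2, k_3, k_4$, the cofactor signs, and the symmetry of the mixed partials all aligned throughout the computation.
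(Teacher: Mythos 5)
Your proposal is correct and follows essentially the same route as the paper: form the linearization $\dot{\vec x}=A\vec x+B\vec x_\tau$ and compute $\det(\lambda I-A-Be^{-\lambda\tau})$, which the paper states and then leaves as ``a standard computation.'' Your column-splitting and Laplace expansion (with the reordering $(x_1,z_1,x_2,z_2)$, which only permutes rows and columns) correctly supplies the omitted details, yielding $p_1p_2$ from the block-triangular part and $-e^{-\lambda\tau}g_1g_2$ from the delayed column.
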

\begin{proof}
 The linearized system is of the form 
 \begin{equation}\label{lineal} 
 \frac{d\vec{x}}{dt}=A\vec{x}+B\vec{x}_{\tau},
 \end{equation}
 where 
{$$ 
\begin{array}{ccc}
A=\begin{pmatrix}
   k_1 \dfrac{\partial^2P_1}{\partial x_1^2} & k_1 \dfrac{\partial^2P_1}{\partial x_2\partial x_1}  &   k_1\dfrac{\partial^2P_1}{\partial z_1\partial x_1} & 0 \\
  0
     &k_2\dfrac{\partial^2 P_2}{\partial x_2^2}  & 0  &
     k_2 \dfrac{\partial^2 P_2}{\partial {z_2} \partial x_2} \\
    k_3 \dfrac{\partial^2 P_1}{\partial {x_1} \partial z_1} &
   k_3  \dfrac{\partial^2 P_1}{\partial {x_2} \partial z_1}  &
    k_3 \dfrac{\partial^2 P_1}{\partial z_1^2}         &   0\\
  0  &
 k_4 \dfrac{\partial^2 P_2}{\partial x_2\partial z_2} & 
   0&
   k_4\dfrac{\partial^2 P_2}{\partial z_2^2}
\end{pmatrix} & \text{and} & B=\begin{pmatrix}
   0 & 0 & 0 & 0 \\
   k_2 \dfrac{\partial^2 P_2}{\partial {x_1}\sb\tau \partial x_2}
     &0 & 0 & 0\\
    0 & 0  & 0 & 0\\
  k_4 \dfrac{\partial^2 P_2}{\partial {x_1}\sb\tau \partial z_2}   & 0&  0&  0
\end{pmatrix}
\end{array}. 
$$  }
The required quasipolynomial is nothing but $Q(\lambda)=\det(A+Be^{-\lambda\tau}-\lambda I)$. A standard computation verifies the proposition. We omit details.
\end{proof}

It will be useful to have conditions for the stability of the Cournot duopoly model when there is no delay, that is, for $\tau=0$.

\begin{proposition}\label{stable}
  The characteristic polynomial 
 \begin{align*}
       Q_0(\lambda)&=p_1(\lambda)p_2(\lambda)-g_1(\lambda)g_2(\lambda)\\
       &=\lambda^4+\alpha_3\lambda^3+\alpha_2\lambda^2+\alpha_1\lambda+\alpha_0,
\end{align*}
  corresponding to the linearization of the Cournot duopoly model~(\ref{CournotRetardo}) without delay is asymptotically stable if  %$\alpha_1>0$ and 
  $\alpha_1\alpha_2\alpha_3>\alpha_1^2+\alpha_3^2\alpha_0$.
\end{proposition}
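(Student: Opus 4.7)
The plan is to invoke the Li\'enard--Chipart variant of the Routh--Hurwitz criterion, which for a monic real quartic $\lambda^4+\alpha_3\lambda^3+\alpha_2\lambda^2+\alpha_1\lambda+\alpha_0$ asserts Hurwitz stability if and only if $\alpha_0>0$, $\alpha_2>0$, $\alpha_3>0$, and $\alpha_1\alpha_2\alpha_3-\alpha_1^2-\alpha_3^2\alpha_0>0$. The fourth inequality is precisely the hypothesis of the proposition, so the task reduces to verifying the three positivity conditions from the standing assumptions (\ref{In2}) and (\ref{In3}).

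I would begin by expanding $Q_0(\lambda)=p_1(\lambda)p_2(\lambda)-g_1(\lambda)g_2(\lambda)$ from Proposition~\ref{formulas}. Introducing
\[
a_i=-\Bigl(k_i\tfrac{\partial^2 P_i}{\partial x_i^2}+k_{i+2}\tfrac{\partial^2 P_i}{\partial z_i^2}\Bigr),\quad
b_i=k_ik_{i+2}\Bigl(\tfrac{\partial^2 P_i}{\partial x_i^2}\tfrac{\partial^2 P_i}{\partial z_i^2}-\bigl(\tfrac{\partial^2 P_i}{\partial x_i\partial z_i}\bigr)^2\Bigr),
\]
\[
c_i=k_i\tfrac{\partial^2 P_i}{\partial x_1\partial x_2},\quad
d_i=-k_ik_{i+2}\Bigl(\tfrac{\partial^2 P_i}{\partial x_1\partial x_2}\tfrac{\partial^2 P_i}{\partial z_i^2}-\tfrac{\partial^2 P_i}{\partial x_1\partial z_i}\tfrac{\partial^2 P_i}{\partial x_2\partial z_i}\Bigr),
\]
one reads off $\alpha_3=a_1+a_2$, $\alpha_2=a_1a_2+b_1+b_2-c_1c_2$, and $\alpha_0=b_1b_2-d_1d_2$ by multiplying out the product $p_1p_2-g_1g_2$.

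Each positivity check is then a single termwise comparison. Since (\ref{In2}) and (\ref{In3}) entail the hypotheses of Proposition~\ref{maximum}, we have $\partial^2 P_i/\partial x_i^2<0$ and $\partial^2 P_i/\partial z_i^2=-q_iF''<0$, so each $a_i>0$ and hence $\alpha_3>0$. Inequality (\ref{In3}) gives the bound $|c_i|=k_i|\partial^2 P_i/\partial x_1\partial x_2|\le -k_i\,\partial^2 P_i/\partial x_i^2\le a_i$, so $|c_1c_2|\le a_1a_2$; combined with $b_i>0$ (which is (\ref{In2}) multiplied by the positive constant $k_ik_{i+2}$), this yields $\alpha_2\ge b_1+b_2>0$. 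Finally, (\ref{In2}) directly gives $b_i>|d_i|$, whence $b_1b_2>|d_1d_2|\ge d_1d_2$, so $\alpha_0>0$.

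The only genuine obstacle is the choice of criterion: the full Routh--Hurwitz form would additionally demand $\alpha_3\alpha_2-\alpha_1>0$, which does not follow by a termwise comparison from (\ref{In2}) and (\ref{In3}) (the obvious bound leaves an indefinite residue of the form $(a_1-a_2)(b_1-b_2)$). The Li\'enard--Chipart form sidesteps this entirely: once $\alpha_0,\alpha_2,\alpha_3>0$ and the hypothesis are in hand, positivity of $\alpha_1$ and of $\alpha_3\alpha_2-\alpha_1$ are forced, and Hurwitz stability of $Q_0$ --- equivalent to asymptotic stability of the undelayed equilibrium --- follows immediately.
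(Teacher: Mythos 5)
Your proof is correct, and it reaches the conclusion by a mildly different reduction than the paper's. The paper also funnels everything through the Routh--Hurwitz conditions, and its verifications of $\alpha_3>0$ and $\alpha_0>0$ from (\ref{In2}) and (\ref{In3}) are the same termwise comparisons you make. Where you diverge is in the third positivity check: the paper proves $\alpha_1>0$ directly, writing $\alpha_1=a_0b_1+c_0d_1+a_1b_0+c_1d_0$ (in its notation $p_1=\lambda^2+a_1\lambda+a_0$, $p_2=\lambda^2+b_1\lambda+b_0$, $g_1=c_1\lambda+c_0$, $g_2=d_1\lambda+d_0$) and showing $a_0b_1>|c_0d_1|$ and $a_1b_0>|c_1d_0|$ via (\ref{In2}) and (\ref{In3}); it then observes that $\alpha_0>0$, $\alpha_1>0$, $\alpha_3>0$ together with the hypothesis force $\alpha_2\alpha_3-\alpha_1>\alpha_3^2\alpha_0/\alpha_1>0$, which fills in the remaining Routh--Hurwitz minor. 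You instead check $\alpha_2>0$ (which, as you note, follows more cheaply from $a_1a_2\geq|c_1c_2|$ and $b_i>0$) and outsource the rest to the Li\'enard--Chipart form of the criterion, under which $\alpha_1>0$ and $\alpha_2\alpha_3-\alpha_1>0$ are automatic a posteriori. Both routes are sound: yours trades the slightly fiddlier pairing argument for $\alpha_1$ against the need to invoke the less elementary Li\'enard--Chipart theorem, while the paper stays within the bare Hurwitz determinants at the cost of one extra coefficient estimate. Your side remark that $\alpha_2\alpha_3-\alpha_1>0$ resists a direct termwise attack is consistent with how the paper handles it --- that inequality is obtained there only as a consequence of $\alpha_0,\alpha_1>0$ and the hypothesis, never by direct estimation.
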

\begin{proof}
We first observe that  the three inequalities $\alpha_0>0$, $\alpha_1>0$ and $\alpha_1\alpha_2\alpha_3>\alpha_1^2+\alpha_3^2\alpha_0$ imply $\alpha_2\alpha_3>\alpha_1+\dfrac{\alpha_3^2\alpha_0}{\alpha_1}>\alpha_1$. Thus, the four inequalities $\alpha_0>0$, $\alpha_1>0$, $\alpha_3>0$ and $\alpha_1\alpha_2\alpha_3>\alpha_1^2+\alpha_3^2\alpha_0$
imply $\dfrac{1}{\alpha_3}>0$, $\dfrac{\alpha_3^2}{\alpha_2\alpha_3-\alpha_1}>0$, $\dfrac{(\alpha_2\alpha_3-\alpha_1)^2}{\alpha_3(\alpha_1\alpha_2\alpha_3-\alpha_1^2-\alpha_3^2\alpha_0)}>0$ and $\alpha_0>0$, which are the Routh-Hurwitz conditions for the desired result. Thus, in order to establish our proposition, we only need to verify the three inequalities $\alpha_0>0$, $\alpha_1>0$  and $\alpha_3>0$. 

To prove $\alpha_3>0$, we notice $\alpha_3$ is  the sum of the linear coefficient of $p_1(\lambda)$ plus the linear coefficient of $p_2(\lambda)$, and as consequence of inequalities (\ref{In2}) and (\ref{In3}) we get
\begin{equation*}
    \alpha_3=-\sum_{i=1}^{2}\left(k_i \dfrac{\partial^2P_i}{\partial x_i^2}+     
  k_{i+2} \dfrac{\partial^2P_i}{\partial z_i^2}\right)>0.
\end{equation*}

On the other hand, using inequality~(\ref{In2}) we get

\begin{align*}
\prod_{i=1}^{2} \left(\frac{\partial^2P_i}{\partial x_i^2}\frac{\partial^2 P_i}{\partial z_i^2}- \left(\frac{\partial^2 P_i}{\partial x_i\partial z_i}\right)^2\right)
    &> \prod_{i=1}^{2}
    \left|\frac{\partial^2P_i}{\partial x_1\partial x_2}\frac{\partial^2 P_i}{\partial z_i^2}-\frac{\partial^2 P_i}{\partial x_1\partial z_i}
    \frac{\partial^2 P_i}{\partial x_2\partial z_i}\right| 
\end{align*}

And thus
    
\begin{align*}
    \alpha_0/(k_1k_2k_3k_4)&=\prod_{i=1}^{2}\left(\dfrac{\partial^2P_i}{\partial x_i^2}    
   \dfrac{\partial^2P_i}{\partial z_i^2}
  -\left(\dfrac{\partial^2P_i}{\partial z_i\partial x_i} \right)^2\right)
  - \prod_{i=1}^{2}
     \left( \dfrac{\partial^2P_i}{\partial x_{1}\partial x_2}  \dfrac{\partial^2 P_i}{\partial z_i^2} -
  \dfrac{\partial^2P_i}{\partial x_1\partial z_i} 
  \dfrac{\partial^2 P_i}{\partial {x_{2}} \partial z_i}   \right)\\
  &>0.
\end{align*}
Finally, for convenience, let us label $P_1(\lambda)=\lambda^2+a_1\lambda+a_0$, $P_2(\lambda)=\lambda^2+b_1\lambda+b_0$, $g_1(\lambda)=c_1\lambda+c_0$ and $g_2(\lambda)=d_1\lambda+d_0$. Using inequalities  (\ref{In2}) and (\ref{In3})  we obtain

\iffalse
\begin{align*}
    m_0&=-c_0 - q_1k_1k_3p\sp\prime(x_1+x_2)F\sp{\prime\prime} \left(x_1p(x_1+x_2)-z_1\right)+q_1k_1k_3C\sp{\prime\prime}(x_1)F\sp{\prime\prime} \left(x_1p(x_1+x_2)-z_1\right)\\
    &=-c_0+r,
\end{align*}
where $r>0$ and
\begin{align}\label{positive}
    -c_0&= k_1 \dfrac{\partial^2P_1}{\partial x_2\partial x_1} k_3 \dfrac{\partial^2 P_1}{\partial z_1^2} -
  k_1\dfrac{\partial^2P_1}{\partial z_1\partial x_1} 
  k_3\dfrac{\partial^2 P_1}{\partial {x_2} \partial z_1} \notag \\
  &=-q_1k_1k_3(1-\sigma)
   \bigl(p(x_1+x_2)+x_1p\sp\prime(x_1+x_2)\bigr)
    F\sp{\prime\prime} \left(x_1p(x_1+x_2)-z_1\right)\\ 
  &>0\notag.
\end{align}
%
\fi

%
\begin{equation*}
    b_1=
  -k_2 \dfrac{\partial^2P_2}{\partial x_2^2}-    k_4 \dfrac{\partial^2P_2}{\partial z_2^2}
> k_2 \left|\dfrac{\partial^2P_2}{\partial x_2^2}\right|\geq |d_1|
\end{equation*}
and
\begin{align*}
    a_0&=k_1k_3\frac{\partial^2P_1}{\partial x_1^2}\frac{\partial^2 P_1}{\partial z_1^2}-k_1k_3 \left(\frac{\partial^2 P_1}{\partial x_1\partial z_1}\right)^2\\
    &> \left|k_1k_3\frac{\partial^2P_1}{\partial x_1\partial x_2}\frac{\partial^2 P_1}{\partial z_1^2}-k_1k_3\frac{\partial^2 P_1}{\partial x_1\partial z_1}
    \frac{\partial^2 P_1}{\partial x_2\partial z_1}\right|\\
    &=|c_0|
\end{align*}

Thus $a_0b_1>|c_0d_1|\geq -c_0d_1$ and so $a_0b_1+c_0d_1>0$. Similarly, $a_1b_0+c_1d_0>0$. Thus $\alpha_1=a_0b_1+c_0d_1+a_1b_0+c_1d_0>0$, as wanted.

\end{proof}

\section{Same marginal production costs}

In this section we prove the main result of this paper. We will give conditions for the stability of the Cournot duopoly model with tax evasion defined in (\ref{CournotRetardo}) and in fact, we will also show that the stability won't be affected by any delay.

As mentioned in the introduction, we  first assume the marginal production costs and the second derivatives of the cost functions on the equilibrium point are equal, that is, 
\begin{equation}\label{samemc}
C\sp\prime_1(x_i\sp\ast)=C\sp\prime_2(x_2\sp\ast)\text{ and } C_1\sp{\prime\prime}(x_1^\ast)=C_i\sp{\prime\prime}(x_2^\ast).
\end{equation}
The following proposition is a slight generalization of an observation in \cite[Page~717]{GR}.

\begin{proposition}\label{symmetric}
Let $(x_1\sp\ast,x_2\sp\ast,z_1\sp\ast,z_2\sp\ast)$ be the equilibrium point of  the Cournot duopoly model defined in (\ref{CournotRetardo}). Then $C_1\sp\prime(x_1\sp\ast)=C_2\sp\prime(x_2\sp\ast)$ if and only if $x_1\sp\ast=x_2\sp\ast$. Furthermore,  $q_1=q_2$ if and only if $ z_2\sp\ast-z_1\sp\ast=\left(x_1\sp\ast-x_2\sp\ast\right) p\left(x_1\sp\ast+x_2\sp\ast \right)$. Thus, if $C_1\sp\prime(x_1\sp\ast)=C_2\sp\prime(x_2\sp\ast)$ and $q_1=q_2$ then $x_1\sp\ast=x_2\sp\ast$ and $z_1\sp\ast=z_2\sp\ast$
\end{proposition}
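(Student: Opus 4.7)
The plan is to exploit the two equations of the first-order system~(\ref{Cournot}) separately. The first equation ties the cost derivatives $C_i'(x_i)$ to the $x$-variables only, and the second equation ties the probabilities $q_i$ to the arguments of $F'$. Each biconditional then reduces to an injectivity/sign observation obtained by subtracting the $i=1$ version of one of the equations from its $i=2$ version.

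For the first biconditional, writing the first equation of (\ref{Cournot}) for each of $i=1,2$ and subtracting yields
\begin{equation*}
C_1'(x_1^*) - C_2'(x_2^*) = (1-\sigma)(x_1^*-x_2^*)\, p'(x_1^*+x_2^*).
\end{equation*}
Since $1-\sigma > 0$ and $p'(x_1^*+x_2^*) < 0$ by Table~\ref{table}, the coefficient of $x_1^*-x_2^*$ on the right is nonzero, so the left-hand side vanishes if and only if $x_1^*=x_2^*$.

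For the second biconditional, I would rewrite the second equation of (\ref{Cournot}) as
\begin{equation*}
F'\bigl(x_i^* p(x_1^*+x_2^*) - z_i^*\bigr) = \sigma\,\frac{1-q_i}{q_i}, \qquad i=1,2.
\end{equation*}
The map $q \mapsto \sigma(1-q)/q$ is a strict bijection on $(0,1)$, so $q_1=q_2$ is equivalent to equality of the two values of $F'$ above. Invoking the injectivity of $F'$ --- guaranteed by $F''>0$ at the equilibrium, which is the very condition used in Proposition~\ref{maximum} to identify the critical point as a maximum --- the equality of $F'$ values is in turn equivalent to
\begin{equation*}
x_1^* p(x_1^*+x_2^*) - z_1^* = x_2^* p(x_1^*+x_2^*) - z_2^*,
\end{equation*}
which rearranges (up to an overall sign I would verify carefully) to $z_2^*-z_1^* = (x_1^*-x_2^*)\,p(x_1^*+x_2^*)$.

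The third assertion is then an immediate combination of the first two: $C_1'(x_1^*)=C_2'(x_2^*)$ forces $x_1^*=x_2^*$, and substituting this into the $z$-identity obtained under $q_1=q_2$ collapses the right-hand side to $0$, giving $z_1^*=z_2^*$. I do not foresee any serious obstacle; the only point meriting care is that injectivity of $F'$ requires strict convexity of $F$ rather than the mere monotonicity recorded in Table~\ref{table}, but this strict convexity is already available at the equilibrium from the maximum-point hypothesis of Proposition~\ref{maximum}, so the argument goes through cleanly.
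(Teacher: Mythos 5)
Your proposal is correct and follows essentially the same route as the paper's own proof: subtract the first equation of~(\ref{Cournot}) across $i=1,2$ and use $p'<0$ for the first biconditional, then use injectivity of $F'$ (from strict convexity of $F$, which the paper indeed assumes beyond Table~\ref{table}) for the second. Your parenthetical caution about the overall sign is warranted --- the rearrangement actually gives $z_2^*-z_1^*=(x_2^*-x_1^*)\,p(x_1^*+x_2^*)$, so the sign in the proposition's statement appears to be a typo, though this is immaterial for the final conclusion since $x_1^*=x_2^*$ there.
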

\begin{proof}
Subtracting the first equation in Proposition~\ref{maximum} for $i=1$ to $i=2$,  we obtain 
\begin{equation*}
    (x_1\sp\ast-x_2\sp\ast)p\sp\prime(x_1\sp\ast+x_2\sp\ast)=0.
\end{equation*}
Since $p$ is strictly decreasing we obtain $x_1\sp\ast=x_2\sp\ast$. Conversely, if $x\sp\ast=x_1\sp\ast=x_2\sp\ast$ then using again the first equation in Proposition~\ref{maximum}, we obtain $\frac{C_1\sp\prime(x_1^\ast)}{1-\sigma}=p(2x\sp\ast)+x\sp\ast p\sp\prime(2x\sp\ast)=\frac{C_2\sp\prime(x_2^\ast)}{1-\sigma}$ and so $C_1\sp\prime(x_1^\ast)=C_2\sp\prime(x_2^\ast)$. 

For the second assertion, assume $q_1=q=q_2$. Using the second equation in Proposition~\ref{maximum} we obtain 
\begin{equation*}
    F\sp\prime(x_1\sp\ast p(x_1\sp\ast+x_2\sp\ast)-z_1\sp\ast)=
    \frac{(1-q_1)\sigma}{q_1}=\frac{(1-q)\sigma}{q}=\frac{(1-q_2)\sigma}{q_2}=F\sp\prime(x_2\sp\ast p(x_1\sp\ast+x_2\sp\ast)-z_2\sp\ast)
\end{equation*}
and since $F\sp\prime$ {is strictly increasing} then it is one-to-one, so we obtain $x_1\sp\ast p(x_1\sp\ast+x_2\sp\ast)-z_1\sp\ast=x_2\sp\ast p(x_1\sp\ast+x_2\sp\ast)-z_2\sp\ast$ from where the desired conclusion follows. Conversely, if $ z_2\sp\ast-z_1\sp\ast=\left(x_1\sp\ast-x_2\sp\ast\right) p\left(x_1\sp\ast+x_2\sp\ast \right)$ then using again the second equation in Proposition~\ref{maximum} we get
\begin{equation*}
    \frac{(1-q_1)\sigma}{q_1}=
    F\sp\prime(x\sp\ast p(2x\sp\ast)-z\sp\ast)=
    \frac{(1-q_2)\sigma}{q_2},
\end{equation*}
which implies $q_1=q_2$, as wanted. The last assertion of the proposition follows from the previous ones.
    \end{proof}

In addition to the hypothesis on the marginal costs, we are going to assume the the probabilities $q=q_1=q_2$ of being caught evading taxes are the same for both firms. We remark that this assumption $q_1=q_2$ was also made in the original introduction of the Cournot duopoly model with tax evasion \cite{GR} and also in subsequent works, e.g~\cite{N}. In addition, we will assume $k_1=k_2$ and $k_3=k_4$, that is, both firms have the same strategies for adapting their productions and income declared for tax purposes. 
\iffalse
Finally, we impose {one technical condition on the second partial derivatives of the profit functions:}
\begin{equation}\label{TC}
   -\frac{\partial^2P_1}{\partial x_1^2}>\left|\frac{\partial^2P_1}{\partial x_2\partial x_1}\right|,
\end{equation}
where the inequality is on the evaluation at the equilibrium point. 
\fi
For easy reference, we establish the following.

\begin{definition}\label{definition_NLCournott}
We define the {\em Cournot duopoly model with equal marginal costs}, as  the delay Cournot duopoly model with tax evasion (\ref{CournotRetardo}) such that conditions (\ref{samemc}) is satisfied together with the equalities $q_1=q_2$, $k_1=k_2$ and $k_3=k_4$.
\end{definition}

In the next proposition we will be able to establish that under the equal marginal cost condition given in Definition~\ref{definition_NLCournott} a simplification of the quasipolynomial (\ref{PolCarac}) is possible.
\begin{proposition}\label{quasiequalcost}
For the  Cournot duopoly model  with equal marginal costs given in Definition~\ref{definition_NLCournott}, the second derivatives of $P_1$ and $P_2$ satisfy the following equalities evaluated at the equilibrium point

$\dfrac{\partial^2P_1}{\partial z_1\partial x_1}=\dfrac{\partial^2P_2}{\partial z_2\partial x_2}$, $\dfrac{\partial^2P_1}{\partial z_1^2}=\dfrac{\partial^2P_2}{\partial z_2^2}$, $\dfrac{\partial^2P_1}{\partial x_1^2}=\dfrac{\partial^2P_2}{\partial x_2^2}$, $\dfrac{\partial^2P_1}{\partial x_2\partial z_1}=\dfrac{\partial^2P_2}{\partial {x_1}\sb\tau \partial z_2}$ and $\dfrac{\partial^2P_1}{\partial x_2\partial x_1}=\dfrac{\partial^2P_2}{\partial {x_1}\sb\tau\partial x_2}$

 Thus,  the quasipolynomial (\ref{PolCarac}) corresponding to the linearization of the form~(\ref{lineal})  reduces to
  $Q(\lambda)=p^2(\lambda)-e^{-\tau\lambda}g^2(\lambda)$, where $p(\lambda)$ and $g(\lambda)$ are second and first degree polynomials, respectively.
\end{proposition}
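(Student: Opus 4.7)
The plan is to leverage the symmetries imposed by Definition~\ref{definition_NLCournott}. First, I would invoke Proposition~\ref{symmetric}: since $C_1\sp\prime(x_1\sp\ast)=C_2\sp\prime(x_2\sp\ast)$ and $q_1=q_2$ are assumed, the equilibrium point satisfies $x_1\sp\ast=x_2\sp\ast=:x\sp\ast$ and $z_1\sp\ast=z_2\sp\ast=:z\sp\ast$. Combined with $C_1\sp{\prime\prime}(x_1\sp\ast)=C_2\sp{\prime\prime}(x_2\sp\ast)$ from (\ref{samemc}), this makes the equilibrium point invariant under the index exchange $1\leftrightarrow 2$: every argument of $p$, $F$ and their derivatives that shows up in the formula~(\ref{FuncionBeneficio}) is unchanged under swapping the two firms' indices.

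Next, I would compute (by straightforward differentiation of (\ref{FuncionBeneficio}) and (\ref{P2Retardo})) the five second partial derivatives appearing in the proposition. Each of them is a polynomial expression in $p^{(k)}(x_1+x_2)$, $F^{(k)}(x_ip(x_1+x_2)-z_i)$, $C_i\sp{\prime\prime}(x_i)$, $q_i$, $\sigma$ and $x_i$. Inspection then shows that each such expression is symmetric under $1\leftrightarrow 2$ once we enforce $x_1\sp\ast=x_2\sp\ast$, $z_1\sp\ast=z_2\sp\ast$, $q_1=q_2$ and $C_1\sp{\prime\prime}(x\sp\ast)=C_2\sp{\prime\prime}(x\sp\ast)$. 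For the derivatives involving $x_{1,\tau}$ in $P_2$, I would rely on the remark made immediately after (\ref{CournotRetardo}): at the equilibrium ${x_{1}}\sb\tau=x_1\sp\ast$, so differentiating~(\ref{P2Retardo}) with respect to ${x_{1}}\sb\tau$ and evaluating at the equilibrium gives the same value as differentiating the undelayed version of $P_2$ in (\ref{FuncionBeneficio}) with respect to $x_1$. These equalities pair off under $1\leftrightarrow 2$ exactly as listed in the statement.

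Finally, with the five equalities in hand and with $k_1=k_2$, $k_3=k_4$, the formulas in Proposition~\ref{formulas} collapse: $p_1(\lambda)$ and $p_2(\lambda)$ are built from the same second derivatives with the same scalar coefficients, so $p_1(\lambda)=p_2(\lambda)=:p(\lambda)$, a polynomial of degree $2$; similarly $g_1(\lambda)=g_2(\lambda)=:g(\lambda)$, a polynomial of degree $1$. Substituting into~(\ref{PolCarac}) yields $Q_\tau(\lambda)=p(\lambda)^2-e^{-\lambda\tau}g(\lambda)^2$, as claimed. The only real obstacle is bookkeeping: verifying that each of the five derivative expressions is genuinely $1\leftrightarrow 2$ symmetric at the equilibrium, which is clear from the explicit formulas but tedious to write out term by term.
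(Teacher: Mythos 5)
Your proposal is correct and follows the same route as the paper, which simply cites Proposition~\ref{formulas} and Proposition~\ref{symmetric} and calls the rest a direct computation; you have merely filled in the bookkeeping the paper omits. In particular, your use of the symmetry $x_1\sp\ast=x_2\sp\ast$, $z_1\sp\ast=z_2\sp\ast$ from Proposition~\ref{symmetric}, the assumption $C_1\sp{\prime\prime}(x_1\sp\ast)=C_2\sp{\prime\prime}(x_2\sp\ast)$, and the identification of $\partial/\partial {x_1}\sb\tau$ with $\partial/\partial x_1$ at the equilibrium are exactly the ingredients the paper's one-line proof relies on.
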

\begin{proof}
   Using Proposition~\ref{formulas} and Proposition~\ref{symmetric}, the result is a direct computation. 
\end{proof}

The following theorem formalizes the claim that the conditions for the Cournot duopoly model with equal marginal costs given in Definition~\ref{definition_NLCournott}, are sufficient for the equilibrium point to be asymptotically stable.  In particular, they must satisfy the equivalent conditions for the model presented in \cite[Proposition~5]{N}.

\begin{theorem}\label{AS}
The equilibrium point of the Cournot duopoly model with equal marginal costs given in Definition~\ref{definition_NLCournott} is asymptotically stable for $\tau= 0$.
\end{theorem}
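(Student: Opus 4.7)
The plan is to exploit the algebraic simplification afforded by Proposition~\ref{quasiequalcost}, which asserts that under the equal marginal costs hypothesis the characteristic quasipolynomial collapses to $Q_\tau(\lambda)=p(\lambda)^2-e^{-\lambda\tau}g(\lambda)^2$. Setting $\tau=0$ yields the factorization
\[
Q_0(\lambda)=p(\lambda)^2-g(\lambda)^2=\bigl(p(\lambda)-g(\lambda)\bigr)\bigl(p(\lambda)+g(\lambda)\bigr),
\]
which reduces the question of locating the four roots of a quartic to checking that the two quadratic factors $p\pm g$ are Hurwitz.

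The key steps I would carry out in order are the following. First, write $p(\lambda)=\lambda^2+a_1\lambda+a_0$ and $g(\lambda)=c_1\lambda+c_0$ in the notation of Proposition~\ref{stable}, so that $p(\lambda)\pm g(\lambda)=\lambda^2+(a_1\pm c_1)\lambda+(a_0\pm c_0)$. By the Routh--Hurwitz criterion for quadratics, both factors are stable precisely when $a_1\pm c_1>0$ and $a_0\pm c_0>0$, i.e.\ when
\[
a_1>|c_1|\qquad\text{and}\qquad a_0>|c_0|.
\]
Second, observe that these are exactly the two inequalities verified inside the proof of Proposition~\ref{stable} from hypotheses \eqref{In2} and \eqref{In3}: the argument showing $b_1>|d_1|$ is an immediate consequence of \eqref{In3} applied to the trace terms of $P_2$, and the argument showing $a_0>|c_0|$ follows from \eqref{In2} applied to the corresponding Hessian minors of $P_1$. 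Under the symmetry guaranteed by Proposition~\ref{quasiequalcost} we have $a_i=b_i$ and $c_i=d_i$, so both inequalities transfer to the present setting verbatim. Third, conclude that $p-g$ and $p+g$ are both Hurwitz, hence all four roots of $Q_0$ lie in the open left half-plane, which is exactly asymptotic stability of the equilibrium at $\tau=0$.

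The main obstacle is really conceptual rather than computational: one must notice that the factorization $Q_0=(p-g)(p+g)$ reduces the problem from checking a four-term Routh--Hurwitz inequality (as in Proposition~\ref{stable}) to checking positivity of the coefficients of two quadratics, and then recognize that these coefficient positivities are precisely the auxiliary inequalities $a_0>|c_0|$ and $a_1>|c_1|$ already extracted from \eqref{In2} and \eqref{In3} in the proof of Proposition~\ref{stable}. Once that recognition is made, no further estimates are needed and the proof is essentially a bookkeeping exercise. One minor care point is to confirm, via Proposition~\ref{quasiequalcost}, that the cross-term derivatives also align so that $g_1=g_2$ and not merely that $p_1=p_2$; this is immediate from the listed equalities, in particular $\partial^2P_1/(\partial x_2\partial x_1)=\partial^2P_2/(\partial {x_1}_\tau\partial x_2)$ and $\partial^2P_1/(\partial x_2\partial z_1)=\partial^2P_2/(\partial {x_1}_\tau\partial z_2)$.
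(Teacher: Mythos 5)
Your proposal is correct, and it reaches the conclusion by a genuinely different (and cleaner) route than the paper. The paper also begins from Proposition~\ref{quasiequalcost}, but it then expands $Q_0(\lambda)=p(\lambda)^2-g(\lambda)^2$ into the quartic $\lambda^4+\alpha_3\lambda^3+\alpha_2\lambda^2+\alpha_1\lambda+\alpha_0$ and invokes Proposition~\ref{stable}, so that the remaining work is to verify the Routh--Hurwitz inequality $\alpha_1\alpha_2\alpha_3>\alpha_1^2+\alpha_3^2\alpha_0$; this is done by an explicit algebraic identity that exhibits $\alpha_3\alpha_2\alpha_1-\alpha_1^2-\alpha_3^2\alpha_0$ as a product of manifestly positive quantities, using $a_1>|c_1|$ (from \eqref{In3}) together with $\alpha_1>0$ and $\alpha_3>0$ (from Proposition~\ref{stable}). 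Your difference-of-squares factorization $Q_0=(p-g)(p+g)$ bypasses the quartic criterion entirely: a monic real quadratic is Hurwitz precisely when both its lower coefficients are positive, and the positivity of $a_1\pm c_1$ and $a_0\pm c_0$ is equivalent to the pair $a_1>|c_1|$ and $a_0>|c_0|$, which are exactly the (strict) inequalities already extracted in the proof of Proposition~\ref{stable} from \eqref{In3} and \eqref{In2} respectively, transferred verbatim by the symmetry $a_i=b_i$, $c_i=d_i$ of Proposition~\ref{quasiequalcost}. What your approach buys is economy and transparency --- no cubic-in-the-coefficients identity to verify, and the four roots of $Q_0$ are located as the roots of two explicit quadratics. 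What it gives up is generality: the factorization is available only because the symmetric model produces a perfect square $p^2-g^2$, whereas the paper's Proposition~\ref{stable} is formulated for the general product $p_1p_2-g_1g_2$ and is reused elsewhere; in the symmetric setting at hand, however, the two arguments have identical scope and rest on the same two inequalities. Your closing remark about checking that $g_1=g_2$ (and not merely $p_1=p_2$) via the listed cross-derivative identities is the right care point, and it is indeed immediate from Proposition~\ref{quasiequalcost}.
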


\begin{proof}
Using Proposition~\ref{quasiequalcost}, we may assume $p(\lambda)=\lambda^2+a_1\lambda+a_0$ and $g(\lambda)=c_1\lambda+c_0$, so that the characteristic polynomial corresponding to the linearization of the Cournot duopoly model with equal marginal costs and without delay, is
\begin{align*}
    Q(\lambda)&=(\lambda^2+a_1\lambda+a_0)^{2}-(c_1\lambda+c_0)^2\\
    &= \lambda^4+2a_1\lambda^3+(a_1^2+2a_0-c_1^2)\lambda^2+2(a_0a_1-c_0c_1)\lambda+a_0^2-c_0^2\\
    &=\lambda^2+\alpha_3\lambda^3+\alpha_2\lambda^2+\alpha_1\lambda+\alpha_0.
\end{align*}
According to Proposition~\ref{stable}, we only need to verify that  $\alpha_1\alpha_2\alpha_3>\alpha_1^2+\alpha_3^2\alpha_0$.

%{\begin{equation*}
% m_0m_1=(-c_0+r)m_1> (-c_0)m_1> %(-c_1)(-c_0)= c_0c_1.
%\end{equation*}}
%
%\begin{align*}
%    m_0m_1&=(-c_0+r)m_1\\
%          &> (-c_0)m_1\\
%          &> (-c_1)(-c_0)\\
%          &= c_0c_1
%\end{align*}
%
%
Using inequality~(\ref{In3}) we have $a_1>|c_1|$
then $a_1^2-c_1^2>0.$ Furthermore, we know from Proposition~\ref{stable} that $\alpha_1>0$ and $\alpha_3>0$. Thus, we compute
\begin{align*}
    \alpha_3\alpha_2\alpha_1 &= 
    (2a_1)(2a_0+a_1^2-c_1^2)(2a_0a_1-2c_0c_1)\\
    &=(2a_1)(2a_0)(2a_0a_1-2c_0c_1)+(2a_1)(a_1^2-c_1^2)(2a_0a_1-2c_0c_1)\\
    &=4a_0^2a_1^2-8a_0a_1c_0c_1+4c_0^2c_1^2
    +4a_0^2a_1^2-4c_0^2c_1^2+(2a_1)(a_1^2-c_1^2)(2a_0a_1-2c_0c_1)\\
    &=\alpha_1^2 + 4a_1^2(a_2^2-c_0^2) +4a_1^2c_0^2-4c_0^2c_1^2+(2a_1)(a_1^2-c_1^2)(2a_0a_1-2c_0c_1)\\
    &=\alpha_1^2+\alpha_3^2\alpha_0+(a_1^2-c_1^2)\left(4c_0^2+ \alpha_3(a_1^2-c_1^2)\alpha_1\right)\\
    &>\alpha_1^2+\alpha_3^2\alpha_0,
\end{align*}
as was to be proved.
\end{proof}

\begin{proposition}\label{noimaginaryroots}
  {The quasipolynomial (\ref{PolCarac})} corresponding to the linearization~(\ref{lineal}) of the  Cournot duopoly model with equal marginal costs given in Definition~\ref{definition_NLCournott} does not have purely imaginary roots. 
\end{proposition}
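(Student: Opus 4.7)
The plan is to argue by contradiction, exploiting the factorization $Q_\tau(\lambda) = p(\lambda)^2 - e^{-\lambda\tau}g(\lambda)^2$ supplied by Proposition~\ref{quasiequalcost}. Suppose $\lambda = i\omega$ with $\omega$ real were a root. Then $p(i\omega)^2 = e^{-i\omega\tau}g(i\omega)^2$, and taking moduli of both sides (using $|e^{-i\omega\tau}|=1$) yields $|p(i\omega)|^2 = |g(i\omega)|^2$. The task thus reduces to showing that this modulus identity can never hold for any real $\omega$.

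Writing $p(\lambda) = \lambda^2+a_1\lambda+a_0$ and $g(\lambda) = c_1\lambda+c_0$ as in the proof of Theorem~\ref{AS}, direct substitution gives
\[
|p(i\omega)|^2 - |g(i\omega)|^2 = \omega^4 + (a_1^2 - c_1^2 - 2a_0)\,\omega^2 + (a_0^2 - c_0^2).
\]
If all three coefficients of this biquadratic in $\omega$ are strictly positive, then the expression is strictly positive for every real $\omega$ and the desired contradiction follows. The constant term satisfies $a_0^2 - c_0^2 > 0$ because $a_0 > |c_0|$, a fact already extracted from inequality~(\ref{In2}) in the proof of Proposition~\ref{stable}.

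The crux is the middle coefficient $a_1^2 - c_1^2 - 2a_0$. I would expand it using the explicit formulas for $p$ and $g$ supplied by Proposition~\ref{quasiequalcost}; the cross term $2k_1k_3\,(\partial^2 P_1/\partial x_1^2)(\partial^2 P_1/\partial z_1^2)$ coming from $a_1^2$ cancels against its counterpart in $-2a_0$, and what remains is
\[
k_1^2\left(\left(\frac{\partial^2 P_1}{\partial x_1^2}\right)^2 - \left(\frac{\partial^2 P_1}{\partial x_1\partial x_2}\right)^2\right) + k_3^2\left(\frac{\partial^2 P_1}{\partial z_1^2}\right)^2 + 2k_1k_3\left(\frac{\partial^2 P_1}{\partial x_1\partial z_1}\right)^2.
\]
Inequality~(\ref{In3}) renders the first summand non-negative; the second summand is strictly positive because the standing assumption $F''>0$ forces $\partial^2 P_1/\partial z_1^2 = -q_1 F''(\cdot) < 0$; and the third summand is non-negative. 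Hence $a_1^2 - c_1^2 - 2a_0 > 0$, closing the argument.

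The main obstacle I anticipate is the algebraic bookkeeping in this expansion: one must track signs carefully so that the mixed product cancels cleanly and the remaining squares line up with the inequalities already in hand, namely (\ref{In3}) and the strict negativity of $\partial^2 P_1/\partial z_1^2$. The factorization step is what makes everything go through; attempting instead to separate real and imaginary parts of $Q_\tau(i\omega)=0$ would produce two coupled transcendental equations in $\omega$ and $\tau$ that are considerably harder to rule out.
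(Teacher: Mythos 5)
Your proposal is correct and follows essentially the same route as the paper: assume $i\omega$ is a root, take moduli to get $|p(i\omega)|^2=|g(i\omega)|^2$, expand, and use inequality~(\ref{In3}) for the $\omega^2$-coefficient and the bound $a_0>|c_0|$ from inequality~(\ref{In2}) for the constant term. Your organization of the computation as a biquadratic in $\omega$ with positive coefficients is a slightly cleaner bookkeeping of the paper's term-by-term lower bound, but the argument is the same.
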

\begin{proof}
By contradiction, suppose $\lambda=i\omega$, $\omega\not=0$, is an imaginary root of $Q\sb\tau(\lambda)=p^2(\lambda)-e^{i\omega\tau}g^2(\lambda)$. Therefore $|p(i\omega)|^2=|g(i\omega)|^2$. 
On the other hand,

Hence
\begin{align*}
    |p(i\omega)|^2&=\mathrm{Re}^2p(iw)+ \mathrm{Im}^2p(iw)\\
    &=\left( (k_1k_3)\left(\frac{\partial^2P_1}{\partial x_i^2}\frac{\partial^2 P_1}{\partial z_1^2}-\left(\frac{\partial^2 P_1}{\partial x_1\partial z_1}\right)^2\right)-\omega^2\right)^2\\
    &\qquad + \omega^2 \left( k_1 \frac{\partial^2 P_1}{\partial x_1^2}+k_3\frac{\partial^2 P_1}{\partial z_1^2} \right)^2 \\
    &= w^4 + w^2\left( k_1^2\left(\frac{\partial^2 P_1}{\partial x_1^2}\right)^2 
    + k_3^2\left(\frac{\partial^2 P_1}{\partial z_1^2}\right)^2
    +2k_1k_3\left(\frac{\partial^2 P_1}{\partial x_1\partial z_1}\right)^2\right)\\
    &\qquad + (k_1k_3)^2
    \left(\frac{\partial^2P_1}{\partial x_i^2}\frac{\partial^2 P_1}{\partial z_1^2}-\left(\frac{\partial^2 P_1}{\partial x_1\partial z_1}\right)^2\right)^2\\
    &>  w^4 + w^2 k_1^2\left(\frac{\partial^2 P_1}{\partial x_1^2}\right)^2+ (k_1k_3)^2 \left(\frac{\partial^2P_1}{\partial x_1\partial x_2}\frac{\partial^2 P_1}{\partial z_1^2}-\frac{\partial^2 P_1}{\partial x_1\partial z_1}
    \frac{\partial^2 P_1}{\partial x_2\partial z_1}\right)^2 \\
      &=\mathrm{Re}^2g(i\omega) 
                  +\mathrm{Im}^2 g(i\omega)\\
                  &=\left|g(i\omega)\right|^2.
\end{align*}
Hence $|p(i\omega)|>|g(i\omega)|$,
a contradiction. This completes the proof.
\end{proof}
We have proved in Theorem~\ref{AS} that the equilibrium point of the Cournot duopoly model with equal marginal costs is asymptotically stable for $\tau=0$, and in the previous Proposition~\ref{noimaginaryroots} we showed that the quasipolynomial corresponding to its linearization does not have imaginary roots. Therefore, there are no roots of the quasipolynomial which cross the imaginary axis as the value of the delay $\tau$ increases. As consequence, see e.g.~\cite{Cv}, we have obtained the following theorem, the main result of our paper.
\begin{theorem}\label{theorem}
  The equilibrium point of the Cournot duopoly model with equal marginal costs given in Definition~\ref{definition_NLCournott} is asymptotically stable and  independent of the delay.
\end{theorem}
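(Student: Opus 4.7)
The plan is to combine the two main results already established for this setting — the asymptotic stability of the zero-delay characteristic polynomial (Theorem~\ref{AS}) and the absence of purely imaginary roots of the quasipolynomial $Q_\tau(\lambda) = p^2(\lambda) - e^{-\lambda\tau}g^2(\lambda)$ for any $\tau \geq 0$ (Proposition~\ref{noimaginaryroots}) — with a standard continuity-of-roots argument for quasipolynomials. The conclusion we want is that every root of $Q_\tau$ has negative real part for every $\tau \geq 0$, which by the principle of linearized stability then gives asymptotic stability of the equilibrium point of the delay differential system~(\ref{CournotRetardo}) independently of $\tau$.

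First I would set up the continuity argument. The coefficients of $Q_\tau(\lambda)$ depend continuously (in fact analytically) on the parameter $\tau \in [0,\infty)$, and classical results on quasipolynomials (Bellman–Cooke type, as in the reference~\cite{Cv} already cited) guarantee that the roots of $Q_\tau$ vary continuously in $\tau$, with only finitely many roots in any given right half-plane $\{\operatorname{Re}\lambda > -r\}$. The assertion to establish is that the set
\[
S = \{\tau \geq 0 : \text{every root of } Q_\tau \text{ lies in } \{\operatorname{Re}\lambda < 0\}\}
\]
coincides with all of $[0,\infty)$. By Theorem~\ref{AS} we know $0 \in S$, so $S$ is non-empty.

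Next I would show $S$ is both open and closed in $[0,\infty)$. Openness is immediate from continuity of the roots together with the fact that in any right half-plane there are only finitely many of them. For closedness, suppose $\tau_n \to \tau^\ast$ with $\tau_n \in S$; if $\tau^\ast \notin S$, there is a root $\lambda^\ast$ of $Q_{\tau^\ast}$ with $\operatorname{Re}\lambda^\ast \geq 0$. By continuity we can track a branch of roots $\lambda(\tau)$ with $\lambda(\tau_n) \in \{\operatorname{Re}\lambda < 0\}$ converging to $\lambda^\ast$; hence $\operatorname{Re}\lambda^\ast = 0$, so $\lambda^\ast = i\omega$ for some $\omega \in \mathbb R$. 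If $\omega \neq 0$, this directly contradicts Proposition~\ref{noimaginaryroots}. The case $\omega = 0$ would force $Q_{\tau^\ast}(0) = 0$, but $Q_{\tau^\ast}(0) = p(0)^2 - g(0)^2 = a_0^2 - c_0^2 > 0$, since the inequality $a_0 > |c_0|$ was already verified in the proof of Proposition~\ref{stable} using inequality~(\ref{In2}). So $\tau^\ast$ cannot exist, and $S$ is closed.

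Since $S$ is a non-empty clopen subset of the connected set $[0,\infty)$, we conclude $S = [0,\infty)$, i.e.\ the spectrum of $Q_\tau$ stays strictly in the open left half-plane for every delay. Together with the standard linearization result for retarded functional differential equations, this yields asymptotic stability of the equilibrium point $(x_1^\ast, x_2^\ast, z_1^\ast, z_2^\ast)$ of~(\ref{CournotRetardo}) for every $\tau \geq 0$, which is precisely delay-independent asymptotic stability. The main obstacle I anticipate is justifying the continuous-dependence-of-roots step cleanly for a quasipolynomial (as opposed to a polynomial), but this is well-documented in the delay-equations literature cited in~\cite{Cv}, so invoking it is legitimate rather than requiring a ground-up argument here.
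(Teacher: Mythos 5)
Your proposal is correct and follows essentially the same route as the paper: asymptotic stability at $\tau=0$ (Theorem~\ref{AS}) plus the absence of purely imaginary roots for all $\tau$ (Proposition~\ref{noimaginaryroots}), combined with the standard continuity-of-roots argument for quasipolynomials from~\cite{Cv}. Your explicit handling of the $\omega=0$ case via $Q_\tau(0)=a_0^2-c_0^2>0$ is a welcome detail that the paper leaves implicit (it follows from $\alpha_0>0$ in Proposition~\ref{stable}), but it does not change the structure of the argument.
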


{\begin{remark}
As a direct consequence of the above theorem, we can assure that the Cournot duopoly model with equal marginal costs does not have bifurcations under parametric variations of the delay $\tau\geq0$.
\end{remark}}
%\begin{remark}
% The conditions of the Cournot duopoly model with equal marginal costs are satisfied by examples found in the literature, for instance, see \cite[Proposition~4]{N} or \cite[Theorem~4]{GMS}.
%\end{remark}

\section{Examples}\label{examples}
In this section we will provide some examples of functions which satisfy the conditions for the Cournot duopoly model with equal marginal costs given in Definition~\ref{definition_NLCournott}. As motivation, let us observe that by Proposition~\ref{symmetric}, the system of equations~(\ref{Cournot}) can be rewritten as
\begin{align}\label{eq1}
    p(2x\sp\ast)+x\sp\ast p\sp\prime(2x\sp\ast)&=\frac{C_1\sp\prime(x\sp\ast)}{1-\sigma}\\  \label{eq2}
    F\sp\prime(x\sp\ast p(2x\sp\ast)-z\sp\ast)&=\frac{\sigma(1-q)}{q} 
\end{align}
Let $G=F\sp\prime$ then both $G$ and $G^{-1}$  are strictly increasing by the definition of $F$, so that solving for $z\sp\ast$ in Equation~(\ref{eq2}) we obtain
\begin{equation*}
      z\sp\ast=x\sp\ast p(2x\sp\ast) -G^{-1}\left(\frac{\sigma(1-q)}{q}\right).
\end{equation*}
Thus, the amount $z\sp\ast$ declared as revenue by the firms in the duopoly will be closer to their actual revenue $x\sp\ast p(2x\sp\ast)$  either when  the effectiveness of audits is increased, that is,  the value of $q$ representing the probability of being caught evading taxes increases, or
by adjusting the penalties for tax evasion, that is, introducing a penalty function such that the value of $G^{-1}\left(\dfrac{\sigma(1-q)}{q}\right)$ is as low as possible. In addition, Theorem~\ref{theorem} assures that the equilibrium point of our model is asymptotically stable and independent of time delay.

We will next provide a family of cost functions which satisfy condition (\ref{samemc}) and two examples of inverse demand functions which further fulfill the hypotheses of Corollary~\ref{cor}. Thus, for our examples, the choice of penalty function $F$ does not affect the stability of the system, we only need to verify that $F$ satisfy $F\sp{\prime\prime}(x_i p(x_1+x_2)-z_i)>0$, say $F(x)=2x^2$. 

\subsection{Examples of cost functions}
In this subsection we provide classes of cost functions which satisfy the required condition (\ref{samemc}). For $i=1,2$, define $C_i(x_i)=f_i+dx_i+cx_i^2$, where $f_i\geq 0$, $d>0$ and $c\geq 0$ are constants. We claim that this functions satisfy condition (\ref{samemc}). Indeed, subtracting the first equation of the system (\ref{Cournot}) for $i=2$ from $i=1$ we obtain
\begin{equation*}
   (1-\sigma)(x_1\sp\ast-x_2\sp\ast)p\sp\prime(x_1\sp\ast+x_2\sp\ast)=2c(x_1\sp\ast-x_2\sp\ast). 
\end{equation*}
By contradiction, if $x_1\sp\ast\not= x_2\sp\ast$ then the above equation implies $p\sp\prime(x_1\sp\ast+x_2\sp\ast)=\frac{2c}{1-\sigma}\geq 0$, contradicting that $p\sp\prime(x)<0$. An application of Proposition~\ref{symmetric} completes the proof of the claim. 

\subsection{Examples of inverse demand functions}
We now provide two examples of inverse demand functions $p(x)$ and analyze the conditions needed to  satisfy the inequalities $p\sp\prime(2x\sp\ast)+x\sp\ast p\sp{\prime\prime}(2x\sp\ast)=\dfrac{\partial^2 x_ip(x_1+x_2)}{\partial x_1 \partial x_2}\leq 0$,  and $\left(\frac{\partial }{\partial x_1}+\frac{\partial}{\partial x_2}\right)x_ip(x_1+x_2)=p(2x\sp\ast)+2x\sp\ast p\sp\prime(2x\sp\ast)\geq 0$
which, according to Corollary~\ref{cor}, are conditions for the required  inequalities.

First, consider $p(x)=1/x$.
We compute
$ 
p(2x\sp\ast)+2x\sp\ast p\sp\prime(2x\sp\ast)=
\frac{1}{2x\sp\ast}-\frac{2x\sp\ast}{4(x\sp\ast)^2}
=0,
$ 
and thus $p(2x\sp\ast)+2x\sp\ast p\sp\prime(2x\sp\ast)\geq 0$, as desired.  Furthermore
$
p\sp\prime(2x\sp\ast)+x\sp\ast p\sp{\prime\prime}(2x\sp\ast)
=-\frac{1}{4(x\sp\ast)^2}+x\sp\ast\frac{2}{8(x\sp\ast)^3}\leq 0,
$ 
as wanted. We conclude that the cost functions $C_i(x_i)=f_i+dx_i+cx_i^2$ from the previous subsection and the inverse demand function $p(x)=\frac{1}{x}$ always satisfy the  conditions from Corollary~(\ref{cor}). Hence,  when $C_i(x)=f_i+dx_i+cx_i^2$, $p(x)=\frac{1}{x}$, $q_1=q_2$, $k_1=k_3$ and $k_2=k_4$, then the Cournot duopoly model (\ref{CournotRetardo}) satisfy the conditions of Definition~\ref{definition_NLCournott}  and so, by Theorem~\ref{theorem}, is asymptotically stable and  independent of the delay. %Notice that this statement is valid regardless of the penalty function $F$ satisfying inequality~(\ref{cor}).

Finally, we consider a second inverse demand function $p(x)=a-bx$ with $a,b>0$.  We use Equation~(\ref{eq1}) to obtain $x\sp\ast=\dfrac{a}{3b}-\dfrac{C_1\sp\prime(x\sp\ast)}{3b(1-\sigma)}$.  Using the cost functions $C_i(x_i)=f_i+dx_i+cx_i^2$ from the previous subsection, we solve $x\sp\ast=\dfrac{a(1-\sigma)-d}{3b(1-\sigma)+2c}$. Clearly the inequality $p\sp\prime(2x\sp\ast)+x\sp\ast p\sp{\prime\prime}(2x\sp\ast)=-b\leq 0$ holds. Since the inequality $p(2x\sp\ast)+2x\sp\ast p\sp\prime(2x\sp\ast)\geq 0$ is equivalent to the inequality
$a-4bx\sp\ast\geq 0$,  we substitute the value of $x\sp\ast$ and obtain
\begin{equation}\label{condition4stab}
  2ac+ 4bd\geq ab\left(1-\sigma\right),
\end{equation}
which is the desired condition for stability. Hence, when $C_i(x)=f_i+dx_i+cx_i^2$, $p(x)=a-bx$, $q_1=q_2$, $k_1=k_3$, $k_2=k_4$ and inequality $(\ref{condition4stab})$ holds, then the Cournot duopoly model (\ref{CournotRetardo}) satisfy the conditions of Definition~\ref{definition_NLCournott} and those of Corollary~\ref{cor} so, by Theorem~\ref{theorem}, is asymptotically stable and  independent of the delay. 
%Notice again that this statement is valid regardless of the formula for the penalty function $F$. 
When inequality~(\ref{condition4stab}) is not satisfied then instability may or may not occur: for example, in case $C_i(x_i)=4x_i$ and $p(x)=a-bx$, in other words, we fix the values  $c=0=f_i$ and $d=4$, then inequality~(\ref{condition4stab}) becomes $\frac{160}{9}\geq a$; notice the inequality does not depend on $b$. In case $a=80$, $b=10$,  $\sigma=0.1$, $q_i=0.5$, $k_i=1$, and $F(x)=2 x^2$  then
the equilibrium point satisfies $x_i\sp\ast=2.518518519$, $z_i\sp\ast=74.59777092$ and will be unstable for all $\tau\geq 0$. In Figure~\ref{graf} we show the roots of the corresponding quasipolynomial for several values of $\tau$. In fact, computer simulations show that there is a bifurcation for some value $b=b_0$ with $68<b_0<69$, more precisely, the system will be stable for $b>b_0$ and unstable for $b<b_0$.

\begin{figure}[ht]
\centering
\includegraphics[scale=0.4]{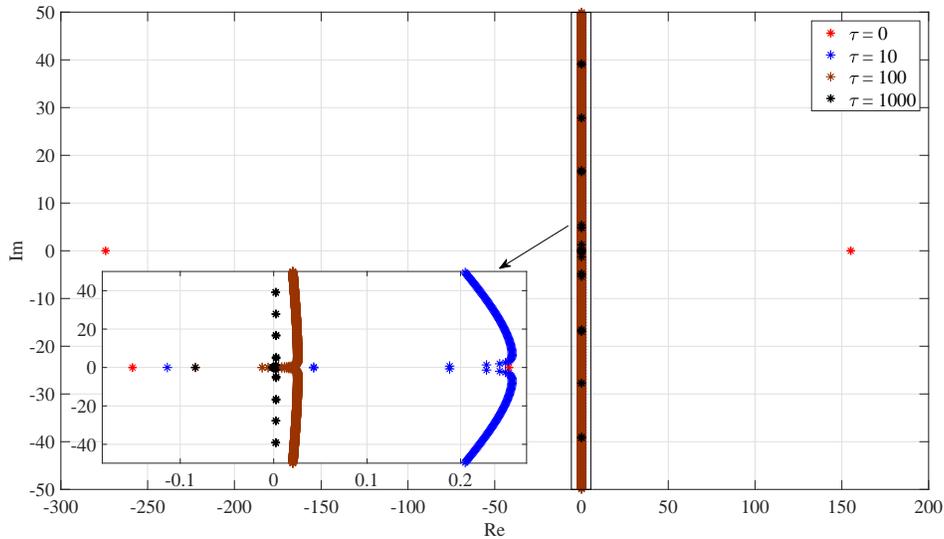}
\caption{Example of an unstable Cournot duopoly model.}
\label{graf}
\end{figure}

\section{Conclusions}

In this paper we presented a stability analysis for a Cournot duopoly model with tax evasion under parametric variations of the delay. As  a consequence of this analysis, we are able to provide conditions for the asymptotic stability of the equilibrium point. It is further proved that these same conditions imply the independence of the stability  under parametric variations of the time delay. In particular, there will be no bifurcations under parametric variations of the time delay.  %Evidently, our conditions for stability while sufficient  are not necessarily necessary, as examples in the literature show \cite[Section~5]{N}; however, 
Our conditions for stability are surprisingly simple and apply for a variety of classical functions found in the literature, as was exhibited in previous section. The examples provided also show that our conditions are not necessary: the system may be stable despite not satisfying the conditions.
%On the other hand, we were unable to find analogous sufficient conditions for ensuring Hopf bifurcations which are known to occur in this model for specific functions.

Under the proposed assumptions, we are able to suggest that either by increasing the effectiveness of audits or by adapting the penalties for tax evasion it may result in  the rise of the tax revenue, more precisely, the rise of the declared amount of revenue to the tax authority,   thus  inhibiting tax evasion and increasing public revenue. Furthermore, under the given conditions, the equilibrium point of the duopoly is not made unstable by a variation of the delay in the insertion of the second firm of the duopoly in the market.

\end{document}